\newtheorem{theorem}{Theorem}[section]
\newtheorem{lemma}[theorem]{Lemma}
\theoremstyle{definition}
\newtheorem{definition}[theorem]{Definition}
\newtheorem{question}[theorem]{Question}
\newtheorem{example}[theorem]{Example}
\newtheorem{proposition}[theorem]{Proposition}
\newtheorem{corollary}[theorem]{Corollary}
\newtheorem{remark}[theorem]{Remark}
\newtheorem{conjecture}[theorem]{Conjecture}
\theoremstyle{remark}
\newcommand{\be}{\begin{equation}}
\newcommand{\ee}{\end{equation}}
\numberwithin{equation}{section}
\begin{document}
\title{Chern numbers on positive vector bundles and combinatorics}
\author{Ping Li}
%    Address of record for the research reported here
\address{School of Mathematical Sciences, Fudan University, Shanghai 200433, China}
%\address{Department of Mathematics, Faculty of Science and Engineering, Waseda University, Shinjuku, Tokyo 169-8555, Japan}
%\curraddr{Department of Mathematics and Information Sciences, Tokyo
%Metropolitan University, Tokyo 192-0397, Japan}
%    Current address
%\curraddr{Department of Mathematics and Information Sciences, Tokyo
%Metropolitan University, Tokyo 192-0397, Japan}
\email{pinglimath@fudan.edu.cn\\
pinglimath@gmail.com}
%    \thanks will become a 1st page footnote.
%\author{}
%\address{}
%\email{}
\thanks{The author was partially supported by the National
Natural Science Foundation of China (Grant No. 12371066).}

%    General info
 \subjclass[2010]{32Q55, 57R20, 06A07, 32M10.}

%\date{January 1, 2001 and, in revised form, June 22, 2001.}

%\dedicatory{Dedicated to Professor Boju Jiang on the occasion of his 80th birthday}
\keywords{Holomorphic vector bundle, ampleness, numerical effectiveness, Griffiths positivity, Chern number, dominance ordering, partially ordered set, Schur polynomial, homogeneous complex manifold, simultaneous positivity.}

\begin{abstract}
Combinatorial ideas are developed in this article to study Chern numbers
on ample and numerically effective vector bundles. An effective lower bound for Chern numbers of ample vector bundles is established, which makes some progress towards a long-standing question. Along this line we prove that Chern numbers on nef vector bundles
obey reverse dominance ordering, which improves upon some classical and recent results. We propose a simultaneous positivity question on
(signed) Chern numbers of compact complex or K\"{a}hler manifolds whose (co)tangent bundles are semipositive in various senses, and show that it holds true for compact homogeneous complex manifolds.
\end{abstract}

\maketitle

\section{Introduction}
In a seminal paper \cite{Gri} Griffiths introduced the now called Griffiths positivity on Hermitian (holomorphic) vector bundles. Its counterpart in algebraic geometry is ample vector bundles in the sense of Hartshorne (\cite{Har}). For line bundles these two notions are equivalent, thanks to the Kodaira embedding theorem.
It is well-known that Griffiths positivity implies ampleness, and the converse was conjectured to also hold true in \cite{Gri}, i.e., an ample vector bundle can be endowed with a Griffiths positive metric.
Griffiths semipositivity can be similarly defined. Its counterpart in algebraic geometry is the numerically effective (\emph{nef} for short) vector bundles, which was originally defined on projective manifolds and now can be extended to general compact complex manifolds (see \cite[\S1]{DPS}). Nefness is strictly weaker than Griffiths semipositivity, even for line bundles (\cite[Ex 1.7]{DPS}).

Griffiths raised in \cite{Gri} several fundamental conjectures and questions around Griffiths positivity and ampleness. One is the aforementioned equivalence between them, which is valid when the base manifold is a curve (\cite{CF}). Finding a Griffiths positive metric on a general ample vector bundle seems to be difficult, but a strategy was recently proposed by Demailly (\cite{De21}) to attack this problem.

Another question asked in \cite{Gri} is to characterize the polynomials in Chern \emph{classes} or Chern \emph{forms} which are positive as cohomology classes or differential forms (in a suitable sense) for Griffiths positive vector bundles. Griffiths introduced Griffiths positive polynomials (\cite[Definition 5.9]{Gri}) and conjectured that they are the desired polynomials (\cite[Conjecture 0.7]{Gri}). At the class level, after some partial results (\cite{BG}, \cite{Gi}, \cite{UT}), this question was completely answered by a celebrated work of Fulton and Lazarsfeld (\cite{FL}), not only for Griffiths positive vector bundles, but also for ample vector bundles. Indeed, they showed that Griffiths positive polynomials are exactly positive combinations of Schur polynomials of Chern classes (\cite[p.54]{FL}), and the latter are precisely positive polynomials for ample vector bundles (\cite[p.36]{FL}). These Fulton-Lazarsfeld type inequalities were extended to nef vector bundles over compact K\"{a}hler manifolds by Demailly, Peternell and Schneider (DPS for short in the sequel) in \cite{DPS}. At the form level, Griffiths' above question is more subtle and in its general form is largely open. For some recent progress towards it see \cite{Gu1}, \cite{Gu2}, \cite{LiMA}, \cite{DF}, \cite{Fa22-1}, \cite{Fa22-2}, \cite{Fi}, \cite{Xi} and related references therein.

An intriguing question arising from Fulton-Lazarsfeld's positivity result is what the effective or even sharp lower bounds are. Motivated by some results and conjectures of Ballico (\cite{Ba1}, \cite{Ba2}), lower bound inequalities were established by
Beltrametti, Schneider, and Sommese for \emph{very ample} vector bundles whenever the dimension of the base manifold is no more than that of the rank of the vector bundle (\cite[p.98]{BSS}). These lower bounds can fail if the vector bundle in question is merely assumed to be ample or even ample and globally generated. Such counterexamples were constructed by Hacon (see \cite{Hac} or \cite[p.123]{La}). We refer to \cite[p.121-123]{La} for a compact nice exposition on these materials. Up to now there is \emph{no} any lower bound inequality for \emph{general} ample vector bundles, to the author's best knowledge.

\emph{The main purpose} of this article is to shed new light on Chern numbers and inequalities among them for ample and nef vector bundles by incorporating some \emph{combinatorial} considerations.

The \emph{first major result} is to establish an effective lower bound for \emph{all} Chern number on \emph{general} ample vector bundles (Theorem \ref{secondmainresult}), which, as far as we are aware, is the first such lower bound. Indeed we can deduce more along this line. So our \emph{second major result} is that the Chern numbers of nef vector bundles obey the reverse dominance ordering (Theorem \ref{firstmainresult1}). As a consequence the maximal and minimal values among these Chern numbers can be directly picked up (Corollary \ref{corollary}).

Some recent evidence (\cite[Thm 7.3]{LiMA}, \cite[Thm 3.1]{LZ}) indicates that Chern numbers (resp. signed Chern numbers) of compact K\"{a}hler manifolds whose tangent (resp. cotangent) bundles are semipositive in various senses (globally generated, Griffiths semipositive, nefness) should be simultaneously positive, i.e., either they are all positive or they all vanish. In addition to proving aforementioned main results,  \emph{another purpose} in this article is to explicitly propose such phenomenon of what we call \emph{simultaneous positivity} and discuss several other important cases. In particular this holds true for compact homogeneous complex manifolds (Theorem \ref{rationalhomogeneous}). We remark that the proof of this simultaneous positivity for known cases relies, among other things, crucially on Corollary \ref{corollary}.

The rest of this article is structured as follows. The main results are stated in Section \ref{section-main results}. In Section \ref{section-preliminaries} we recall Fulton-Lazarsfeld type inequalities for ample and nef vector bundles, and basic combinatorial facts about the dominance ordering. Section \ref{section-proof of firstmainresult} is then devoted to the proof of Theorem \ref{firstmainresult1}. After discussing the lower bound $B(\lambda)$ in detail in Section \ref{section-the lower bound quantity B}, we prove Theorem \ref{secondmainresult} in Section \ref{section-Proof of secondmainresult}. The question of simultaneous positivity is proposed and discussed in Section \ref{section-simultaneous positivity} and then Theorem \ref{rationalhomogeneous} is showed in the last Section \ref{section-proofrationalhomogeneous}.

\section{Main results}\label{section-main results}
In this section some necessary combinatorial notation is introduced and main results are stated.

\subsection{Some combinatorial notation}\label{combinatorical notation}
A \emph{partition} $\lambda$ of a positive integer $n$ is a finite sequence $\lambda=(\lambda_1,\lambda_2,\ldots,\lambda_l)$ of positive integers in weakly decreasing order: $\lambda_1\geq\lambda_2\geq\cdots\geq\lambda_l>0$, such that $\sum_{i=1}^l\lambda_i=n$. These $\lambda_i$ are called \emph{parts} of $\lambda$. Denote by $l(\lambda):=l$ and call it the \emph{length} of $\lambda$. For later convenience we may regard $\lambda_{i}=0$ if $i>l(\lambda)$. This means that $\lambda$ can be identified with $(\lambda_1,\lambda_2,\ldots,\lambda_{l},0,0,\ldots)$ by adding any number of zeros. %It is also convenient to use another notation which indicates the number of times each integer appears: $\lambda=(1^{m_1(\lambda)},2^{m_2(\lambda)},\ldots)$. This means that $i$ appears with multiplicity $m_i(\lambda)$ among the parts $\lambda_i$.

Let $\text{Par}(n)$ be the set of all partitions of the positive integer $n$. There are several orderings on $\text{Par}(n)$, among them the most important one is the \emph{dominance ordering} (\cite[p.7]{Ma}). Let $\lambda=(\lambda_1,\lambda_2,\ldots)$ and $\mu=(\mu_1,\mu_2,\ldots)$ belong to $\text{Par}(n)$. We call $\lambda$ \emph{dominates} $\mu$, denoted by $\lambda\geq\mu$, if
\be\label{dominance ordering}\sum_{i=1}^j\lambda_i\geq\sum_{i=1}^j\mu_i,\qquad\text{for all $j\geq1$.}\ee
In $\text{Par}(n)$ we say $\lambda>\mu$ if $\lambda\geq\mu$ and $\lambda\neq\mu$. For example we have (for simplicity the parentheses are omitted)
\be\label{par5}5>41>32>311>221>2111>11111\qquad \text{in $\text{Par}(5)$}.\ee
Also note that this dominance ordering is only \emph{partial} as soon as $n\geq6$. For instance, $3111$ and $222$ are incomparable in $\text{Par}(6)$.

With respect to this ordering the maximal and minimal values are clearly as follows.
\be\label{minimalmaximal}(n)\geq\lambda\geq(\underbrace{1,\ldots,1}_{n}),\qquad\text{for all $\lambda\in\text{Par}(n)$}.\ee

Given two positive integers $r$ and $n$, put
$$\Gamma(n,r):=\big\{\lambda=(\lambda_1,\lambda_2,\ldots)\in\text{Par}(n)~|~\lambda_1\leq r\big\}.$$
Obviously
\be\label{minimalmaximal2}
\hat{1}:=(\underbrace{r,\ldots,r}_{\lfloor\frac{n}{r}\rfloor},n-r\lfloor \frac{n}{r}\rfloor)
\geq\lambda\geq
(\underbrace{1,\ldots,1}_{n})=:\hat{0},\qquad\text{for all $\lambda\in\Gamma(n,r)$},\ee
where $\lfloor\frac{n}{r}\rfloor$ is the largest integer no more than $\frac{n}{r}$. Note also that %$l(\hat{1})=\lceil\frac{n}{r}\rceil$, which is the least integer no less than $\frac{n}{r}$, and
$\Gamma(n,r)=\text{Par}(n)$ whenever $n\leq r$.

\subsection{Main results}
As usual we use $c_i(E)\in H^{2i}(M;\mathbb{Z})$ to denote the $i$-th Chern class of a rank $r$ complex vector bundle $E$ over some manifold $M$, and
$$c_{\lambda}(E):=\prod_{i\geq1}c_{\lambda_i}(E)\in H^{2n}(M;\mathbb{Z}),\qquad\text{if $\lambda=(\lambda_1,\lambda_2,\ldots)\in\Gamma(n,r)$}.$$
We may simply write $c_i:=c_i(E)$ and $c_{\lambda}:=c_{\lambda}(E)$ if there is no confusion from the context.

Our first major result is
\begin{theorem}\label{secondmainresult}
Let $E$ be an ample vector bundle of rank $r$ over an $n$-dimensional projective manifold $X$ with $n\leq r$. For any $\lambda\in\text{Par}(n)$, there exists a positive integer $B(\lambda)$ such that
\be\label{secondmainresultformula}\int_Xc_{\lambda}(E)\geq B(\lambda)\geq2^{l(\lambda)-1}.\ee
\end{theorem}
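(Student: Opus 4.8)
The plan is to expand the monomial $c_{\lambda}(E)=\prod_i c_{\lambda_i}(E)$ into Schur polynomials of the Chern classes and then invoke Fulton--Lazarsfeld positivity, reading off the lower bound combinatorially. Writing $c_i=e_i$ for the $i$-th elementary symmetric function of the Chern roots, I would start from the standard identity $h_{\lambda}=\sum_{\nu}K_{\nu\lambda}\,s_{\nu}$ (where $K_{\nu\lambda}$ is the Kostka number) and apply the involution $\omega$, which sends $h_i\mapsto e_i$ and $s_{\nu}\mapsto s_{\nu'}$, to obtain the universal symmetric-function identity $e_{\lambda}=\sum_{\nu\vdash n}K_{\nu'\lambda}\,s_{\nu}$. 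Substituting the Chern roots then yields, in $H^{2n}(X;\mathbb{Z})$,
\[
c_{\lambda}(E)=\sum_{\nu\vdash n}K_{\nu'\lambda}\,s_{\nu}(E),
\]
where $s_{\nu}(E)$ denotes the Schur polynomial in the Chern classes.

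The hypothesis $n\le r$ is what makes the geometry apply uniformly: for every $\nu\vdash n$ one has $\nu_1\le n\le r$ and $l(\nu)\le n\le r$, so $s_{\nu}(E)$ is a nonzero class lying in the range covered by Fulton--Lazarsfeld (indeed $\Gamma(n,r)=\mathrm{Par}(n)$ here). Since $E$ is ample, each $s_{\nu}(E)$ is numerically positive, and as it is an integral class of top degree, $\int_X s_{\nu}(E)$ is a positive integer, hence $\ge 1$. Integrating the displayed identity therefore gives
\[
\int_X c_{\lambda}(E)=\sum_{\nu\vdash n}K_{\nu'\lambda}\int_X s_{\nu}(E)\ \ge\ \sum_{\nu\vdash n}K_{\nu'\lambda}=:B(\lambda),
\]
a positive integer; equivalently $B(\lambda)=\sum_{\sigma\vdash n}K_{\sigma\lambda}$ is the total number of semistandard Young tableaux of content $\lambda$. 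It then remains to prove the purely combinatorial inequality $B(\lambda)\ge 2^{\,l(\lambda)-1}$.

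For this I would read $B(\lambda)$ as the sum of the Schur coefficients of $c_{\lambda}=\prod_{i=1}^{l}e_{\lambda_i}=\prod_{i=1}^{l}s_{(1^{\lambda_i})}$, built one factor at a time by the Pieri rule: multiplying by $e_k=s_{(1^k)}$ adds, each with coefficient $1$, a vertical $k$-strip to every partition already present. Let $f_m$ be the sum of the Schur coefficients of $\prod_{i=1}^{m}e_{\lambda_i}$, so $f_1=1$ and $f_l=B(\lambda)$. Since all Pieri coefficients are nonnegative there is no cancellation, so $f_m=\sum_{\nu}a_{\nu}\,N_{\lambda_m}(\nu)$, where the $a_{\nu}\ge 0$ are the stage-$(m-1)$ coefficients (summing to $f_{m-1}$) and $N_{\lambda_m}(\nu)$ is the number of vertical $\lambda_m$-strips addable to $\nu$. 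The crux is the elementary lemma that for any nonempty $\nu$ and any $k\ge 1$ one has $N_k(\nu)\ge 2$: one may extend the first column downward by $k$ boxes, or instead place one box at the end of the first row and the remaining $k-1$ boxes down the first column, and these two vertical strips yield different partitions (their first parts differ). As every $\nu$ occurring at stage $m-1\ge 1$ is nonempty, $f_m\ge 2f_{m-1}$, whence $B(\lambda)=f_l\ge 2^{\,l-1}$.

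I expect the last, combinatorial step to be the main obstacle: one must arrange the Schur expansion so that the doubling is transparent and cancellation-free (guaranteed by the nonnegativity of the Pieri rule) and verify the ``at least two vertical strips'' lemma, including its small and degenerate cases. By contrast, the geometric input is immediate once the expansion is in place, since Fulton--Lazarsfeld directly supplies both the positivity and the integrality of each $\int_X s_{\nu}(E)$.
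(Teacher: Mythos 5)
Your proposal is correct, and it reaches the theorem by a genuinely different route than the paper. The paper defines $B(\lambda)$ through \emph{longest chains} in the dominance order: it telescopes $c_{\lambda}=c_{\hat{1}}+\sum_i(c_{\lambda^{(i)}}-c_{\lambda^{(i+1)}})$ along a maximal chain from $\hat{1}=(n)$ to $\lambda$, uses Brylawski's description of covering relations to write each difference as $S_{(\mu_i,\mu_j)}\prod_{p\neq i,j}c_{\mu_p}\in FL(n,r)$, bounds the weight of each such difference below by $2^{l(\lambda^{(i)})-2}$ via Pieri (this is where $n\leq r$ enters, exactly as in your version), and finally proves $B(\lambda)\geq 2^{l(\lambda)-1}$ by tracking how partition lengths decrease along the chain. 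You instead expand $c_{\lambda}(E)$ in one shot via the Kostka expansion $e_{\lambda}=\sum_{\nu}K_{\nu'\lambda}s_{\nu}$, take $B(\lambda)$ to be the full coefficient sum $\sum_{\sigma\vdash n}K_{\sigma\lambda}$ (in the paper's language, this is exactly the weight $\mathcal{W}(c_{\lambda})$ of Definition 6.1, since the paper's $S_{\nu}(c_1,\ldots,c_r)$ is $s_{\nu'}$ of the Chern roots, a reindexing you handle consistently), and prove the bound $\geq 2^{l(\lambda)-1}$ by the clean doubling argument $f_m\geq 2f_{m-1}$, resting on the correct observation that any nonempty partition admits at least two addable vertical $k$-strips; the hypothesis $n\leq r$ guarantees no Schur term dies from the rank constraint, just as in the paper. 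Your bound is in fact at least as strong as the paper's: the paper only shows $\mathcal{W}(c_{\lambda})\geq B_{\mathrm{paper}}(\lambda)$, while your $B(\lambda)$ \emph{is} $\mathcal{W}(c_{\lambda})$, the sharpest constant this Fulton--Lazarsfeld-plus-integrality method can produce, and it comes with a concrete combinatorial meaning (total number of semistandard Young tableaux of content $\lambda$). What the paper's chain approach buys in exchange is integration with the dominance-order machinery it develops anyway for Theorem 2.4 (reverse dominance ordering of Chern numbers for nef bundles), of which the telescoping decomposition is a byproduct; your argument bypasses Brylawski's covering proposition and the Greene--Kleitman longest-chain material entirely.
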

\begin{remark}
\begin{enumerate}
\item
By the existence of this bound $B(\lambda)$, we mean that, given $\lambda\in\text{Par}(n)$, there is a method to write down $B(\lambda)$ explicitly (see Section \ref{section-the lower bound quantity B} and Definition \ref{def of B}). %By definition, $l(\lambda)\geq\lceil\frac{n}{r}\rceil$ for any $\lambda\in\Gamma(n,r)$ and hence the estimate for $B(\lambda)$ in (\ref{secondmainresultformula}) is meaningful.

\item
We shall see in the course of the proof that the lower bound $B(\lambda)$ we obtain is by no means optimal in general. The issue of finding the optimal lower bound appears to be difficult. Also the proof relies on the condition $r\geq n$, which is also needed in \cite{BSS} for the case of very ample vector bundles. If $r<n$, our method is invalid and it would be interesting, and probably difficult meanwhile, to find an effective lower bound in this case.
\end{enumerate}
\end{remark}

Our second major result is that Chern numbers of nef vector bundles obey the \emph{reverse} dominance ordering.
\begin{theorem}\label{firstmainresult1}
Let $E$ be a rank $r$ nef vector bundle over an $n$-dimensional compact K\"{a}hler manifold $(M,\omega)$ with $\omega$ the K\"{a}hler form, and $k$ any integer such that $1\leq k\leq n$. If $\lambda\geq\mu$ in $\Gamma(k,r)$, then
\be\label{firstmainresult1formula}
0\leq\int_Mc_{\lambda}(E)[\omega]^{n-k}\leq\int_{M}
c_{\mu}(E)[\omega]^{n-k}.\ee
\end{theorem}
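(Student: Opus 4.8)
The plan is to translate the reverse-dominance inequality into a statement about Schur-positivity and then invoke the Fulton--Lazarsfeld--DPS positivity recalled in Section \ref{section-preliminaries}. Writing $x_1,\dots,x_r$ for the Chern roots of $E$, one has $c_i=e_i(x)$, so the Chern monomial is the elementary-symmetric product $c_{\lambda}=e_{\lambda}=\prod_i e_{\lambda_i}$. Since the DPS extension of the Fulton--Lazarsfeld inequalities asserts that $\int_M s_{\nu}(E)[\omega]^{n-k}\ge 0$ for every partition $\nu$ of $k$ (the terms with $l(\nu)>r$ vanishing identically when evaluated on $r$ Chern roots), it suffices to prove the purely combinatorial claim that, whenever $\lambda\ge\mu$ in $\Gamma(k,r)$, the difference $c_{\mu}-c_{\lambda}=e_{\mu}-e_{\lambda}$ is a nonnegative integral combination of Schur polynomials $s_{\nu}$ in the Chern roots. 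The lower bound $0\le\int_M c_{\lambda}[\omega]^{n-k}$ is then immediate, since $e_{\lambda}=\prod_i s_{(1^{\lambda_i})}$ is itself Schur-positive.

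First I would reduce to an elementary step. Using the combinatorial fact (recalled in Section \ref{section-preliminaries}) that the dominance order is generated by single-box moves, any relation $\lambda\ge\mu$ can be realized by a chain $\lambda=\nu^{(0)}>\nu^{(1)}>\cdots>\nu^{(m)}=\mu$ in which $\nu^{(t+1)}$ is obtained from $\nu^{(t)}$ by transferring one box from a larger part to a smaller one. Crucially, every $\nu^{(t)}$ lies in $\Gamma(k,r)$: dominance forces $(\nu^{(t)})_1\le\lambda_1\le r$, so the parts never exceed $r$, and the recalled positivity theorem genuinely applies to each Schur term produced. Hence it is enough to treat a single move, say $\mu$ obtained from $\lambda$ by replacing two parts $P$ and $Q$ of $\lambda$ (with $P\ge Q+2$) by $P-1$ and $Q+1$.

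For such a move the difference factors as $c_{\mu}-c_{\lambda}=D\cdot\big(e_{P-1}e_{Q+1}-e_Pe_Q\big)$, where $D=\prod e_{(\cdot)}$ collects the unchanged parts. The heart of the argument is the identity $e_ae_b-e_{a+1}e_{b-1}=s_{(2^b,1^{a-b})}$, valid for $a\ge b\ge 1$, which follows by expanding each product of two columns $s_{(1^a)}s_{(1^b)}=\sum_{t=0}^{b}s_{(2^t,1^{a+b-2t})}$ via the (dual) Pieri rule and cancelling the overlapping terms. Applying it with $a=P-1$ and $b=Q+1$ gives $e_{P-1}e_{Q+1}-e_Pe_Q=s_{(2^{Q+1},1^{P-Q-2})}\ge 0$, a \emph{single} Schur polynomial. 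Multiplying by $D$, itself a product of the Schur functions $s_{(1^{(\cdot)})}$, and expanding by the Littlewood--Richardson rule shows $c_{\mu}-c_{\lambda}$ is a nonnegative integral combination of $s_{\nu}$'s, as required. Summing the resulting inequality $\int_M c_{\nu^{(t+1)}}[\omega]^{n-k}\ge\int_M c_{\nu^{(t)}}[\omega]^{n-k}$ along the chain then yields $\int_M c_{\lambda}[\omega]^{n-k}\le\int_M c_{\mu}[\omega]^{n-k}$.

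I expect the main obstacle to be the reduction step and its bookkeeping rather than the positivity identity itself. One must be careful that the \emph{atomic} decrease in the dominance order corresponds exactly to transferring a box between parts that differ by at least two—a transfer between parts differing by one leaves the multiset, and hence $e_{\lambda}$, unchanged and must be excluded—and that all intermediate partitions stay inside $\Gamma(k,r)$. Verifying that the covering relations of the dominance lattice are of precisely this single-box form, and confirming that the extracted Schur polynomial $s_{(2^{Q+1},1^{P-Q-2})}$ carries the correct nonnegative sign (so that the inequality runs in the reverse direction asserted by the theorem), is where the real care is needed.
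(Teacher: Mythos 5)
Your proposal is correct and takes essentially the same route as the paper: decompose $\lambda\geq\mu$ into a chain of single-box transfers (the paper uses Brylawski's covering relations for this), show the atomic difference $c_{\mu}-c_{\lambda}$ is a Schur polynomial times a product of Schur polynomials, hence Schur-positive by Littlewood--Richardson, and conclude via the Demailly--Peternell--Schneider inequalities. The only substantive difference is the choice of basis: you work with Schur functions of the Chern roots, so your key identity $e_ae_b-e_{a+1}e_{b-1}=s_{(2^b,1^{a-b})}$ requires the dual Pieri rule, whereas in the paper's convention $S_{\lambda}(c_1,\ldots,c_r)=\det(c_{\lambda_i-i+j})$ the conjugate form of the same identity, $c_ic_j-c_{i+1}c_{j-1}=S_{(i,j)}(c_1,\ldots,c_r)$, is immediate from the $2\times 2$ determinant definition.
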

\begin{remark}
If $k\leq5$, all elements in $\text{Par}(k)$ can be comparable, as illustrated in (\ref{par5}). By Theorem \ref{firstmainresult1} we can give an inequality chain for \emph{all} Chern numbers whenever $k\leq5$.
\end{remark}

Theorem \ref{firstmainresult1}, together with (\ref{minimalmaximal}) and (\ref{minimalmaximal2}), immediately yields
\begin{corollary}\label{corollary}
Let $E\longrightarrow(M,\omega)$ be a rank $r$ nef vector bundle over an $n$-dimensional compact K\"{a}hler manifold, and $1\leq k\leq n$. For any $\lambda\in\Gamma(k,r)$ we have
\be\label{consequence1.1}
0\leq\int_Mc_{k}[\omega]^{n-k}\leq\int_{M}
c_{\lambda}[\omega]^{n-k}\leq\int_{M}
c_1^k[\omega]^{n-k},\qquad\text{if $k\leq r$},
\ee
or
\be\label{consequence1.2}
0\leq\int_M(c_{r})^{\lfloor\frac{k}{r}\rfloor}
c_{k-r\lfloor\frac{k}{r}\rfloor}
[\omega]^{n-k}\leq\int_{M}
c_{\lambda}[\omega]^{n-k}\leq\int_{M}
c_1^k[\omega]^{n-k},\qquad\text{if $k>r$}.
\ee
\end{corollary}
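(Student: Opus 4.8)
The plan is to read off Corollary \ref{corollary} as the specialization of Theorem \ref{firstmainresult1} to the two extremal elements of the poset $\Gamma(k,r)$, which are precisely the partitions singled out in (\ref{minimalmaximal}) and (\ref{minimalmaximal2}). The conceptual point I would state at the outset is that, since Theorem \ref{firstmainresult1} \emph{reverses} the dominance ordering, the maximal partition in $\Gamma(k,r)$ produces the \emph{smallest} Chern number and the minimal partition produces the \emph{largest} one; hence the entire task reduces to translating the two extremal partitions into their associated Chern monomials $c_{\lambda}(E)$ and invoking the inequality chain (\ref{firstmainresult1formula}).

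First I would dispose of the case $k\leq r$. Here $\Gamma(k,r)=\text{Par}(k)$, so (\ref{minimalmaximal}) gives $(k)\geq\lambda\geq(\underbrace{1,\ldots,1}_{k})$ for every $\lambda\in\Gamma(k,r)$. Applying the right-hand inequality of (\ref{firstmainresult1formula}) to the comparable pairs $(k)\geq\lambda$ and $\lambda\geq(1^k)$, and observing that the extremal partitions correspond to $c_{(k)}(E)=c_k$ and $c_{(1^k)}(E)=c_1^k$, I obtain the inner two inequalities of (\ref{consequence1.1}); the leftmost nonnegativity $0\leq\int_M c_k[\omega]^{n-k}$ is exactly the left-hand inequality of (\ref{firstmainresult1formula}) applied to the single partition $(k)$.

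The case $k>r$ is identical in spirit, now invoking (\ref{minimalmaximal2}) in place of (\ref{minimalmaximal}). Every $\lambda\in\Gamma(k,r)$ satisfies $\hat{1}\geq\lambda\geq\hat{0}$ with $\hat{1}$ and $\hat{0}$ as defined there, and feeding these into Theorem \ref{firstmainresult1} yields (\ref{consequence1.2}) once one records that $c_{\hat{1}}(E)=(c_r)^{\lfloor k/r\rfloor}c_{k-r\lfloor k/r\rfloor}$ and $c_{\hat{0}}(E)=c_1^k$. The only point warranting a moment's care — and the closest thing to an obstacle — is the boundary subcase $r\mid k$, in which the last part $k-r\lfloor k/r\rfloor$ of $\hat{1}$ vanishes; here the convention $c_0=1$ guarantees that the written monomial $(c_r)^{\lfloor k/r\rfloor}c_0=(c_r)^{k/r}$ still matches $c_{\hat{1}}(E)$, so no separate argument is needed. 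Beyond this bookkeeping the corollary carries no additional content, being a direct consequence of the reverse-dominance property together with the identification of the top and bottom of $\Gamma(k,r)$.
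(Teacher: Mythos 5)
Your proposal is correct and is essentially the paper's own argument: the paper derives the corollary immediately from Theorem \ref{firstmainresult1} combined with the extremal partitions identified in (\ref{minimalmaximal}) and (\ref{minimalmaximal2}), exactly as you do. Your extra remark about the subcase $r\mid k$ and the convention $c_0=1$ is a harmless (and accurate) piece of bookkeeping that the paper leaves implicit.
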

\begin{remark}\label{remark}
\begin{enumerate}
\item
The maximal value in (\ref{consequence1.1}) and (\ref{consequence1.2}) is due to DPS (\cite[Cor.2.6, Cor.2.7]{DPS}) and plays a decisive role in establishing their structure theorem on compact K\"{a}hler manifolds with nef tangent bundles. The minimal value in (\ref{consequence1.1}), which is trivial whenever $k>r$, is due to the author and Zheng (\cite[Thm 2.9]{LZ}), which also has some related applications (see \cite[\S6-7]{LiMA}). So the minimal one in (\ref{consequence1.2}) is a complement to it.

\item
The inequality (\ref{consequence1.1}) has a similar flavor to \cite[Thm 3.2]{LiMA}, in which we require the (slightly stronger) \emph{Bott-Chern seimipositivity} on the vector bundle, and with the compensation that the base manifold can be non-K\"{a}her. See \cite[\S 3.1]{LZ} for related applications.
\end{enumerate}
\end{remark}

%Completely parallel to Theorem \ref{firstmainresult1} and Corollary \ref{corollary}, we have a similar result for ample vector bundles.
%\begin{proposition}\label{firstmainresult2}
%Let $E$ be an ample vector bundle of rank $r$ over an $n$-dimensional projective manifold $X$. If $\lambda>\mu$ in $\Gamma(n,r)$, then
%$$0<\int_Xc_{\lambda}(E)<\int_{X}
%c_{\mu}(E).$$
%In particular, for any $\lambda\in\Gamma(n,r)-\{\hat{0},\hat{1}\}$ \big(recall (\ref{minimalmaximal2})\big), we have
%\begin{eqnarray}\label{consequence3}
%\left\{\begin{array}{ll}
%0<\int_Xc_{n}<\int_{X}
%c_{\lambda}<\int_{X}
%c_1^n,&\text{if $n\leq r$}\\
%~\\
%0<\int_X(c_{r})^{\lfloor\frac{n}{r}\rfloor}
%c_{n-r\lfloor\frac{n}{r}\rfloor}<\int_{X}
%c_{\lambda}<\int_{X}
%c_1^n, &\text{if $n>r$}.
%\end{array}
%\right.\nonumber
%\end{eqnarray}
%\end{proposition}

Apply (\ref{consequence1.1}) in Corollary \ref{corollary} and its analogue \cite[Thm 3.2]{LiMA} for Bott-Chern semipositive vector bundles, the simultaneous positivity for Chern numbers (resp. signed Chern numbers) of compact K\"{a}hler manifolds with semipositive bisectional curvature (resp. with globally generated cotangent bundle) was obtained in \cite[Thm 3.1]{LZ} (resp. in \cite[Thm 7.3]{LiMA}). We will propose this phenomenon in a more broad scope in Section \ref{section-simultaneous positivity} and discuss several other important cases therein. In particular, by refining some arguments in \cite{LiMA}, we continue to apply Corollary \ref{corollary} to verify this phenomenon for compact homogeneous complex manifolds. Namely, we have
\begin{theorem}\label{rationalhomogeneous}
Let $M$ be a compact homogeneous complex manifold. Then either all Chern numbers of $M$ are positive, in which case it is a rational homogeneous manifold; or all Chern numbers of $M$ vanish.
\end{theorem}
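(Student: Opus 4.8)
The plan is to combine the structure theory of compact homogeneous complex manifolds with Corollary \ref{corollary}. First I would record that the holomorphic tangent bundle of $M=G/H$ is globally generated: the fundamental vector fields of the transitive holomorphic action of $G=\mathrm{Aut}^0(M)$ span $T_pM$ at every point, so the evaluation map $\mathfrak{g}\otimes\mathcal{O}_M\to T_M$ is surjective and $T_M$ is in particular nef. Next I would invoke the Tits fibration: there is a holomorphic fiber bundle $\pi\colon M\to M_0$ over a rational homogeneous manifold $M_0=G/P$ (which is projective, Fano, and hence K\"{a}hler), whose fiber is a compact complex parallelizable manifold, so that the relative tangent bundle $T_{M/M_0}$ is holomorphically trivial. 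Setting $d:=\dim_{\mathbb{C}}M-\dim_{\mathbb{C}}M_0\ge0$, the proof then splits according to whether $d=0$ or $d>0$.

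When $d=0$ we have $M=M_0$, a rational homogeneous manifold; it is Fano, so $\int_Mc_1^n=(-K_M)^n>0$, and its Euler number $\int_Mc_n=\chi(M)=\abs{W/W_P}$ is positive, being the number of Schubert cells. Applying Corollary \ref{corollary} to the nef bundle $E=T_M$ (of rank $r=n$) with $k=n$ gives $0\le\int_Mc_n\le\int_Mc_\lambda\le\int_Mc_1^n$ for every $\lambda\in\text{Par}(n)$; since the left-hand endpoint $\chi(M)$ is positive, every Chern number is positive.

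When $d>0$ the triviality of $T_{M/M_0}$ and the relative tangent sequence $0\to T_{M/M_0}\to T_M\to\pi^{*}T_{M_0}\to0$ give $c(T_M)=\pi^{*}c(T_{M_0})$, whence $c_\lambda(T_M)=\pi^{*}c_\lambda(T_{M_0})$ for each $\lambda\in\text{Par}(n)$. But $c_\lambda(T_{M_0})$ lives in $H^{2n}(M_0)$, which vanishes because $\dim_{\mathbb{C}}M_0=n-d<n$; therefore $\int_Mc_\lambda(T_M)=0$ and all Chern numbers vanish. Since $\int_Mc_1^n$ is itself a Chern number, the two cases are mutually exclusive and exhaustive, and positivity of all Chern numbers forces $d=0$, i.e. $M$ rational homogeneous, which completes the dichotomy.

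The genuine content is the structure-theoretic input: establishing (or correctly citing) that the parallelizable fibers of the Tits fibration make $T_{M/M_0}$ trivial, equivalently that all Chern data of $M$ are pulled back from the flag manifold $M_0$. I expect this to be the main obstacle, together with the bookkeeping forced by the fact that $M$ need not be K\"{a}hler once $d>0$; the latter is precisely why Corollary \ref{corollary} is applied only in the automatically K\"{a}hler case $d=0$, while the vanishing case is disposed of by the purely cohomological degree argument above.
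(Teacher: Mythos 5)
Your route is genuinely different from the paper's, and most of it works, but there is one real gap --- precisely at the step you yourself flag as the main obstacle. The inference ``the fibers of the Tits fibration are complex parallelizable, hence the relative tangent bundle $T_{M/M_0}$ is holomorphically trivial'' is not valid: a holomorphic bundle that restricts trivially to every fiber of a fibration need not be trivial, nor need its Chern classes vanish. (On the homogeneous fibration $p_2\colon\mathbb{P}^1\times\mathbb{P}^1\to\mathbb{P}^1$, the bundle $p_2^{*}\mathcal{O}_{\mathbb{P}^1}(1)$ is trivial on every fiber yet has $c_1\neq0$.) Fortunately, the statement your argument actually needs is weaker and is provable. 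Write the Tits fibration as $\pi\colon M=G/H\to G/N=M_0$ with $N=N_G(H^0)$, so that $T_{M/M_0}$ is the associated bundle $G\times_H(\mathfrak{n}/\mathfrak{h})$ for the isotropy (adjoint) action of $H$. Since $H^0$ is normal in $N$, the subalgebra $\mathfrak{h}$ is an ideal of $\mathfrak{n}$, so $[\mathfrak{h},\mathfrak{n}]\subset\mathfrak{h}$ and therefore $H^0$ acts trivially on $\mathfrak{n}/\mathfrak{h}$: the isotropy representation factors through the \emph{discrete} group $H/H^0$. Hence $T_{M/M_0}$ is a flat holomorphic bundle --- equivalently, one checks directly that $T_{M/M_0}\cong\pi^{*}\big(G\times_N(\mathfrak{n}/\mathfrak{h})\big)$ is pulled back from $M_0$ --- and either description yields exactly what you use: the rational total Chern class of $T_M$ lies in $\pi^{*}H^{*}(M_0;\mathbb{Q})$, so all Chern numbers vanish once $\dim_{\mathbb{C}}M_0<n$. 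Holomorphic triviality itself is stronger than needed and does not follow from fiberwise parallelizability. With this repair your $d>0$ case is correct; your $d=0$ case (Corollary \ref{corollary} for the nef tangent bundle together with positivity of the Euler number of $G/P$, the number of Schubert cells) is fine as written, and is in fact the step that the paper's own proof leaves implicit.

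By contrast, the paper never invokes Tits' structure theorem. It argues: some $c_{\lambda}[M]\neq0$ forces $c_1^n[M]>0$, by the non-K\"ahler analogue of Corollary \ref{corollary} valid for the Bott--Chern semipositive bundle $TM$ of a homogeneous manifold (\cite{LiMA}); an invariant Griffiths semipositive metric then makes $c_1(M)$ quasi-positive, so $M$ is Moishezon by the Siu--Demailly solution of the Grauert--Riemenschneider conjecture (cf. \cite{MM}), projective by Demailly's theorem that Moishezon plus nef tangent bundle implies projective (\cite{De92}, \cite{DPS}), and finally rational homogeneous by Borel--Remmert (\cite{BR}), the abelian factor being excluded by $c_1^n>0$. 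So the paper consumes analytic inputs where you consume the Tits fibration plus a cohomological degree count; both are legitimate, and yours is arguably more self-contained topologically, but it is complete only after the flatness (or pullback) property of $T_{M/M_0}$ above is actually established.
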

\begin{remark}
\begin{enumerate}
\item
A rational homogenous manifold is of the form $G/P$, where $G$ is a semi-simple complex Lie group and $P$ a parabolic subgroup. Note that $G/P$ is a Fano manifold. Conversely, a well-known conjecture du to Campana and Peternell (\cite{CP}) predicts that a Fano manifold with nef tangent bundle should be rational homogeneous.

\item
Since $G/P$ is Fano and hence it is projective. Nevertheless, in the second case in Theorem \ref{rationalhomogeneous}, the homogeneous manifold $M$ can even be \emph{non-K\"{a}hler}. For instance, there are homogeneous non-K\"{a}hler complex structures on the product of odd-dimensional
spheres $S^{2p+1}\times S^{2q+1}$ \big($(p,q)\neq(0,0)$\big) found by Calabi and Eckmann (\cite{CE}). Their Chern numbers all vanish and so they fit into the second case in Theorem \ref{rationalhomogeneous}.

\item
Theorem \ref{rationalhomogeneous} could be better appreciated after we recall/dicuss some related results and questions in Section \ref{section-simultaneous positivity}.
\end{enumerate}
\end{remark}
%Theorem \ref{rationalhomogeneous} also provides (indirect) evidence towards a conjecture proposed in \cite{LZ}. More details shall be found in Section \ref{section-rationalhomogeneous}.

\section{Preliminaries}\label{section-preliminaries}
In this section Fulton-Lazarsfeld type inequalities for ample and nef vector bundles are recalled, and more combinatorics on $\text{Par}(n)$ is introduced in the framework of partially ordered sets.

\subsection{Fulton-Lazarsfeld type inequalities}
With the notation and convention set up in Section \ref{combinatorical notation} understood, we introduce the following
\begin{definition}
For each partition $\lambda=(\lambda_1,\lambda_2,\ldots,\lambda_l)\in\Gamma(n,r)$ $\big(l=l(\lambda)\big)$, the \emph{Schur polynomial} $S_{\lambda}(c_1,\ldots,c_r)\in\mathbb{Z}[c_1,\ldots,c_r]$ is defined as follows.
\be
\begin{split}
S_{\lambda}(c_1,\ldots,c_r):= \ &\text{det}(c_{\lambda_i-i+j})_{1\leq i,j\leq l}%\qquad(\text{$i:$ row, $j:$ column})
\\
= \ &
%\text{det}\left(\begin{array}{cccc}
\begin{vmatrix}\label{matrix}
c_{\lambda_1} & c_{\lambda_1+1} &\cdots &c_{\lambda_1+l-1}\\
c_{\lambda_2-1} & c_{\lambda_2} &\cdots&c_{\lambda_2+l-2}\\
\vdots&\vdots&\ddots&\vdots\\
c_{\lambda_l-l+1}&c_{\lambda_l-l+2}&\cdots&c_{\lambda_l}
%\end{array}\right)
\end{vmatrix}
,\end{split}\ee
where the conventions $c_0:=1$ and $c_i:=0$ if $i\notin[0,r]$ are understood.
\end{definition}
\begin{remark}
\begin{enumerate}
\item
If $\lambda_1>r$, the entries of the first row in the determinant (\ref{matrix}) all vanish and hence $S_{\lambda}(c_1,\ldots,c_r)=0$. So the restriction $\lambda\in\Gamma(n,r)$ is meaningful.

\item
If each $c_i$ is assigned weight $i$, $S_{\lambda}(c_1,\ldots,c_r)$ is then a weighted homogeneous polynomial of degree $\sum_i\lambda_i=n$. It turns out that Schur polynomials $S_{\lambda}(c_1,\ldots,c_r)$ \big($\lambda\in\Gamma(n,r)$\big) form a $\mathbb{Z}$-basis of weight homogeneous polynomials of degree $n$ in the variables $c_1,\ldots,c_r$.

\item
Schur polynomials have deep connections with combinatorics, geometry and representation theory (\cite[\S I]{Ma}, \cite{Fu}, \cite[\S 14]{Fu2}, \cite[\S 7]{Sta99}). For example, if these $c_i$ are viewed as the $i$-th complete symmetric functions of some variables, then $S_{\lambda}(c_1,\ldots,c_r)$ is exactly the \emph{Schur symmetric function} with respect to the partition $\lambda$ (\cite[(3.4)]{Ma}), which justifies the name and the $\mathbb{Z}$-basis property mentioned above (\cite[(3.2)]{Ma}). If these $c_i$ are Chern classes of the universal quotient bundles over the complex Grassmannians, $S_{\lambda}(c_1,\ldots,c_r)$ are precisely their \emph{Schubert classes} (\cite[\S14.7]{Fu2}), which is a key fact in the proof in \cite{FL}.
\end{enumerate}
\end{remark}

In our proof the Schur polynomials $S_{\lambda}(c_1,\ldots,c_r)$ with $l(\lambda)\leq2$ shall play a key role at places, and so we record them below for later convenience.
\begin{example}\label{example schur}
We have
\be\label{specialschur1}S_{(i)}(c_1,\ldots,c_r)=c_i\ee and
\be\label{specialschur2}
\begin{split}
S_{(i,j)}(c_1,\ldots,c_r)&=
\begin{vmatrix}
c_{i} & c_{i+1}\\
c_{j-1} & c_{j}
\end{vmatrix}\\
&=c_{i}c_j-c_{i+1}c_{j-1}.\qquad \big(1\leq j\leq i\leq r\big)
\end{split}\ee
\end{example}
The following fundamental result in \cite{FL} answers Griffiths' question mentioned in the Introduction at the class level completely.
\begin{theorem}[Fulton-Lazarsfeld]\label{FL inequality theorem}
Let $E$ be a rank $r$ ample vector bundle over an $n$-dimensional projective manifold $X$. Then
$$\int_{X}S_{\lambda}\big(c_1(E),\ldots,c_r(E)\big)>0$$
for any $\lambda\in\Gamma(n,r)$.
Conversely, if the integration of a degree $n$ polynomial $P(c_1,\ldots,c_r)\in\mathbb{Q}[c_1,\ldots,c_r]$ over any ample vector bundle is positive, $P$ must be a positive linear combination of some Schur polynomials.
\end{theorem}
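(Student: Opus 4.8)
The statement has two halves that I would attack by genuinely different mechanisms: the forward assertion that $\int_X S_\lambda(c(E)) > 0$ for every ample $E$, and its converse characterizing numerically positive polynomials. For the forward direction my plan is to realize the Schur class $S_\lambda(c(E))$ as the class of an \emph{effective} cycle and then show that this cycle is nonempty. First I would reduce to the globally generated situation: if $E$ is generated by finitely many global sections, the universal property gives a classifying morphism $f\colon X\to\mathbb{G}$ to a Grassmannian of rank-$r$ quotients with $E\cong f^{*}Q$, where $Q$ is the universal quotient bundle, and hence $S_\lambda(c(E))=f^{*}S_\lambda(c(Q))$. By the identification noted in item (3) of the Remark following the definition, $S_\lambda(c(Q))$ is exactly the Schubert class $\sigma_\lambda$ on $\mathbb{G}$, represented by a Schubert variety. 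Since $\mathbb{G}$ is a homogeneous space, Kleiman's transversality theorem lets me move a representative of $\sigma_\lambda$ into general position with respect to $f$, so that (when $\abs{\lambda}=n$) the pullback $f^{-1}(\sigma_\lambda)$ is a genuine reduced $0$-cycle; thus $\int_X S_\lambda(c(E))\geq 0$, and the nef case follows by continuity after passing to ample twists $E\otimes A^{t}$ and letting $t\to 0$, using that Schur classes vary polynomially in $t$.

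The delicate point, which I expect to be the main obstacle, is upgrading $\geq 0$ to the \emph{strict} inequality $>0$ that genuinely exploits ampleness rather than mere nefness. Here I would invoke a nonemptiness theorem for degeneracy loci. Via the Giambelli--Thom--Porteous determinantal description, $S_\lambda(c(E))$ is the class of a degeneracy locus $D$ of a morphism of bundles manufactured from $E$; ampleness of $E$ forces $D$ to be nonempty whenever its expected dimension is $\geq 0$. In the base case $\abs{\lambda}=n$ and $\lambda=(n)$ this is just the statement that a general section of an ample bundle of rank equal to $\dim X$ vanishes on a nonempty finite set. The proof of the general nonemptiness is the heart of the matter: it rests on a connectedness argument of Grothendieck--Lefschetz type carried out on a flag bundle over $X$, where the relevant tautological bundle inherits ampleness from $E$, forcing the would-be-empty locus to be both connected and of the expected dimension. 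Nonemptiness of the expected $0$-cycle, together with the effectivity from the previous paragraph, then yields $\int_X S_\lambda(c(E))>0$.

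For the converse I would start from the $\mathbb{Z}$-basis property noted after the definition of Schur polynomials, writing the given degree-$n$ polynomial uniquely as $P=\sum_{\lambda\in\Gamma(n,r)}a_\lambda S_\lambda$ with $a_\lambda\in\mathbb{Q}$, and aiming to show that every $a_\lambda\geq 0$ and that they are not all zero. The strategy is to build, for each fixed $\mu$, a test configuration isolating the single coefficient $a_\mu$. On a large Grassmannian $\mathbb{G}$ with universal quotient bundle $Q$ one again has $S_\lambda(c(Q))=\sigma_\lambda$, and the orthogonality of Schubert classes, $\int_{\mathbb{G}}\sigma_\lambda\cdot\sigma_{\mu^{\vee}}=\delta_{\lambda\mu}$ for the complementary partition $\mu^{\vee}$, shows that integrating $P(c(Q))$ over a Schubert variety $W_{\mu^{\vee}}$ of dimension $n$ produces precisely $a_\mu$. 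Since $Q$ is only globally generated and $W_{\mu^{\vee}}$ only singular, the hypothesis — positivity over \emph{ample} bundles — does not apply directly, so I would transfer it by approximation: evaluate $P$ on genuinely ample bundles (ample twists, or direct sums $Q\oplus L^{\oplus N}$ on products with projective space) whose Schur integrals converge to the above Schubert numbers, and let the ample parameter tend to the nef boundary. Then the strict positivity of $\int_X P(c(E))$ over ample $E$ passes in the limit to $a_\mu\geq 0$, while applying the hypothesis to a single ample bundle excludes the degenerate case where all $a_\lambda$ vanish, leaving a nonzero nonnegative combination.

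In summary, the effectivity and nonnegativity inputs are soft (classifying maps, Schubert effectivity, Kleiman transversality, and continuity), whereas the two places where ampleness is indispensable — strict positivity in the forward direction and the conversion of the positivity hypothesis into sign conditions on coefficients in the converse — both route through the nonemptiness and connectedness of degeneracy loci of ample bundles, which I regard as the genuine technical core of the theorem.
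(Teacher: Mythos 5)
A preliminary remark: the paper itself offers \emph{no} proof of this statement --- it is quoted as a known theorem of Fulton and Lazarsfeld \cite{FL} and used as a black box --- so your proposal can only be measured against the original argument (reproduced, e.g., in \cite[\S 8.3]{La}). Your converse half is essentially that argument: expand $P=\sum_{\lambda}a_{\lambda}S_{\lambda}$ in the Schur basis, test against $n$-dimensional Schubert varieties carrying the universal quotient bundle $Q$, use the duality $\int_{\Omega_{\mu^{\vee}}}S_{\lambda}\big(c(Q)\big)=\delta_{\lambda\mu}$, and pass from ample twists to the nef boundary to get $a_{\mu}\geq0$, with non-triviality supplied by one ample evaluation. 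Two wrinkles you should still address: Schubert varieties are singular, so either the positivity hypothesis must be formulated over projective \emph{varieties} (as in \cite{FL}) or you must pull back to a desingularization, where the restriction of $Q$ stays globally generated and the integrals are unchanged; and the limiting twist is implemented by $\mathbb{Q}$-twists via Bloch--Gieseker covers, not by $Q\oplus L^{\oplus N}$, which changes the rank and hence scrambles the Schur expansion you are trying to isolate.

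The forward half, however, has genuine gaps. First, you cannot ``reduce to the globally generated situation'': an ample vector bundle may have no nonzero sections at all (e.g., stable bundles of small positive degree on a curve of high genus), which also invalidates your base case that ``a general section of an ample bundle of rank equal to $\dim X$ vanishes on a nonempty finite set'' --- the true statement $\int_X c_n(E)>0$ is precisely Bloch--Gieseker's theorem \cite{BG}, and its proof needs their covering trick exactly because sections are unavailable. Twisting until $E$ is globally generated and letting the twist shrink only yields nonnegativity for nef bundles; strict positivity for ample $E$ never survives such a limit. Second, your mechanism for strictness --- nonemptiness of a degeneracy locus via a connectedness theorem on a flag bundle --- both inverts the standard logical order and is insufficient as stated: if the degeneracy locus fails to have the expected dimension, the Thom--Porteous/Kempf--Laksov identity no longer equates $S_{\lambda}\big(c(E)\big)\cap[X]$ with an effective $0$-cycle supported on it, so nonemptiness alone gives no lower bound on the integral; and in \cite{FL} and \cite[\S 8.3]{La} the nonemptiness theorem for degeneracy loci is \emph{deduced from} Schur positivity (extended to twisted bundles), not used to prove it.

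The device that actually powers the Fulton--Lazarsfeld proof, and which would repair your sketch, is the positivity of \emph{cone classes}. By the Kempf--Laksov/Giambelli determinantal formula one writes
$$S_{\lambda}\big(c(E)\big)\cap[X]=\pi_{*}\big(\xi^{\,m}\cap[P(C)]\big),$$
where $C$ is an irreducible determinantal cone inside a bundle of homomorphisms $\mathrm{Hom}(F,E)\cong E^{\oplus f}$ with $F$ trivial and flagged --- nonempty fiberwise by construction, with no genericity, transversality, or global section required --- and $\xi$ is the tautological hyperplane class on the relevant projectivization, which is ample precisely because $E$ is. Strict positivity then reduces to the elementary fact that the top self-intersection of an ample class on an irreducible projective variety is positive. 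This replaces your Kleiman-transversality and Grothendieck--Lefschetz connectedness inputs entirely; Kleiman's theorem in particular cannot give strictness, since complementary-dimensional effective cycles on a Grassmannian can very well have intersection number zero (e.g., $\sigma_{1,1}\cdot\sigma_{2}=0$ on the Grassmannian of lines in $\mathbb{P}^{3}$), so ampleness must enter through the cone-class computation rather than through position arguments.
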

Theorem \ref{FL inequality theorem} was extended to nef vector bundles over compact K\"{a}hler manifolds in \cite[\S 2]{DPS}.
\begin{theorem}[Demailly-Peternell-Schneider]\label{DPS inequality theorem}
Let $E$ be a rank $r$ nef vector bundle over an $n$-dimensional compact K\"{a}hler manifold $(M,\omega)$. Then
$$
\int_{M}S_{\lambda}\big(c_1(E),\ldots,c_r(E)\big)\wedge[\omega]^{n-k}\geq0$$
for any $k\in[1,n]$ and any $\lambda\in\Gamma(k,r)$.
\end{theorem}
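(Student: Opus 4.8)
The plan is to deduce the nonnegativity for nef $E$ from the strict positivity for ample $E$ (Theorem \ref{FL inequality theorem}) by a perturbation-and-limit argument, combined with the basic K\"{a}hler-geometric fact that a product of nef $(1,1)$-classes has nonnegative integral. Recall first this fact: on a compact K\"{a}hler $m$-fold $Y$, nef classes lie in the closure of the K\"{a}hler cone, and since $\int_Y\alpha_1\wedge\cdots\wedge\alpha_m>0$ for K\"{a}hler classes $\alpha_i$, the same integral is $\ge0$ for nef $\alpha_i$ by continuity. The whole point is to represent $\int_M S_\lambda(c(E))\wedge[\omega]^{n-k}$ as such an integral, up to a limiting procedure.

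The geometric device is the projective bundle $\pi:\mathbb{P}(E)\to M$ with tautological quotient $\mathcal{O}_{\mathbb{P}(E)}(1)$ and $\xi:=c_1\big(\mathcal{O}_{\mathbb{P}(E)}(1)\big)$. Since $\mathcal{O}(1)$ is a quotient of $\pi^*E$ and quotients of nef bundles are nef, $\xi$ is a nef class; moreover $\xi$ restricts to the positive hyperplane class on each fibre $\mathbb{P}^{r-1}$, so $\xi+t\,\pi^*\omega$ is a K\"{a}hler class on $\mathbb{P}(E)$ for every $t>0$ (relative ampleness of $\mathcal{O}(1)$ together with the pullback of $\omega$). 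This already settles the Segre classes cleanly: from the Grothendieck relation $\pi_*(\xi^{\,r-1+j})=s_j(E)$ one gets $\int_M s_j(E)\wedge[\omega]^{n-j}=\int_{\mathbb{P}(E)}\xi^{\,r-1+j}\wedge\pi^*[\omega]^{n-j}\ge0$. For a general Schur class I would pass to the complete flag bundle $\tau:\mathrm{Fl}(E)\to M$ and invoke the Jacobi--Trudi/Giambelli pushforward formula, which writes $S_\lambda(c(E))=\tau_*\big(\prod_i x_i^{\,a_i}\big)$ as the pushforward of an explicit monomial in the Chern roots $x_i$ with nonnegative exponents $a_i$ (this is exactly the Schubert-class fact flagged in the Remark after the Definition).

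The hard part --- and the reason the Segre computation does not generalize verbatim --- is that the Chern roots $x_i$ are only \emph{differences} of nef classes (each $x_i=c_1(Q_{i-1})-c_1(Q_i)$ for the tautological quotient filtration), so the monomial $\prod_i x_i^{\,a_i}$ is not manifestly a product of nef classes and its pushforward positivity is not automatic. To overcome this I would first treat the ample case and then degenerate. On a projective base I would twist: for an ample line bundle $H$ the $\mathbb{Q}$-twisted bundle $E\langle tH\rangle$ corresponding to $\xi+t\,\pi^*h$ is ample for $t>0$, so Theorem \ref{FL inequality theorem}, in its $\mathbb{Q}$-twisted form, gives strict positivity of $\int_X S_\lambda\big(c(E\langle tH\rangle)\big)\wedge(\cdots)$; letting $t\to0^+$ and using $c(E\langle tH\rangle)\to c(E)$ yields the desired inequality $\ge0$. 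The genuine obstacle is the general compact K\"{a}hler case, where $M$ carries no ample line bundle and Theorem \ref{FL inequality theorem} is unavailable: there one must argue intrinsically, replacing ampleness by the K\"{a}hler perturbation $\xi+t\,\pi^*\omega$ and establishing the positivity of the relevant pushforward directly from the positivity of products of nef/K\"{a}hler classes, controlling the determinantal cancellations produced by the non-nef Chern roots. This last step --- the analogue of Fulton--Lazarsfeld positivity carried out purely within K\"{a}hler geometry --- is where essentially all the work of \cite{DPS} lies, and it would be the crux of my proof as well.
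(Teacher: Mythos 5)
First, a structural point: the paper does not prove this statement at all. Theorem \ref{DPS inequality theorem} is quoted as a known result, with attribution and a citation to \cite[\S 2]{DPS}, and is then used as a black box in the proofs of Theorems \ref{firstmainresult1} and \ref{secondmainresult}. So the only meaningful comparison is with the proof in \cite{DPS} itself, which your proposal does not reproduce.

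Your attempt contains correct material, but it has a genuine gap --- one you flag yourself. What is correct: the elementary fact that products of nef $(1,1)$-classes on a compact K\"ahler manifold integrate nonnegatively; the Segre-class case $s_j(E)=\pi_*(\xi^{\,r-1+j})$ (in the paper's notation these are the Schur classes $S_{(1,\ldots,1)}$), which indeed follows since $\xi$ and $\pi^*[\omega]$ are nef on the compact K\"ahler manifold $\mathbb{P}(E)$; and the projective case, where $\mathbb{Q}$-twisting by an ample $H$ and letting $t\to0^+$ is the standard reduction of the nef case to Theorem \ref{FL inequality theorem} (cf.\ \cite[\S 8]{La}). But the theorem is asserted over an \emph{arbitrary} compact K\"ahler base $(M,\omega)$, where no ample line bundle need exist and $[\omega]$ may be transcendental, and for \emph{all} Schur classes, whose flag-bundle expressions involve differences of nef classes with the attendant determinantal cancellations --- exactly the obstruction you identify. (Indeed, already $c_2(E)$ of a rank-two nef bundle is \emph{not} a nonnegative combination of pushforwards of monomials in the nef classes $\xi,\pi^*c_1$ from $\mathbb{P}(E)$, so no naive projective-bundle computation can close the argument.) Your final sentence concedes that this case ``is where essentially all the work of \cite{DPS} lies, and it would be the crux of my proof as well.'' Deferring the crux to \cite{DPS} is circular when the statement to be proved \emph{is} the theorem of \cite{DPS}: what remains unproved after your reductions is precisely the content of the theorem beyond what was classically available from \cite{FL} plus perturbation. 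A complete proof must supply a mechanism converting the signed determinant $S_\lambda$ into genuinely nef data (for instance by adapting the degeneracy-locus/cone-class scheme of Fulton--Lazarsfeld to the K\"ahler category, so that the only positivity input is the nonnegativity of products of nef classes); that mechanism is exactly what \cite[\S 2]{DPS} provides and what is absent here.
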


\subsection{Combinatorics on $\text{Par}(n)$}\label{combinatorics on}
For a nice introduction to materials presented in this subsection, we refer to \cite[Chapter 3]{Sta97}.

A \emph{partially ordered set} (\emph{poset} for short) $(P,\geq)$ is a set $P$, together with a binary relation ``$\geq$" on $P$ which is reflexive ($x\geq x$, $\forall~x\in P$),  antisymmetric ($x\geq y$ and $y\geq x$ imply $x=y$), and transitive ($x\geq y$ and $y\geq z$ imply $x\geq z$). We use the obvious notation $x>y$ (resp. $x\leq y$, $x<y$) to mean $x\geq y$ and $x\neq y$ (resp. $y\geq x$, $y>x$). In this article the poset we are interested in is the set $\text{Par}(n)$ ($n\in\mathbb{Z}_{>0}$) together with the dominance partial ordering $``\geq"$ introduced in Section \ref{dominance ordering}. It is easy to check that $(\text{Par}(n),\geq)$ is indeed a poset. The dominance ordering also makes the subset $\Gamma(n,r)$ of $\text{Par}(n)$ a poset $(\Gamma(n,r),\geq)$. Various combinatorial structures on $(\text{Par}(n),\geq)$ were investigated by Brylawski in \cite{Br}, and many appearances of this ordering in mathematics were summarized in \cite[p.202-203]{Br}.

We say that the poset $P$ has a $\hat{1}$ (resp. $\hat{0}$) if there exists an element $\hat{1}\in P$ (resp. $\hat{0}\in P$) such that $x\leq\hat{1}$ (resp. $x\geq\hat{0}$) for all $x\in P$. For the poset $\Gamma(n,r)$ we have
$$\hat{1}=(\underbrace{r,\ldots,r}_{\lfloor\frac{n}{r}\rfloor},n-r\lfloor \frac{n}{r}\rfloor)\xlongequal{n\leq r}(n),\qquad\text{and}\qquad \hat{0}=(\underbrace{1,\ldots,1}_{n}),$$
as mentioned in (\ref{minimalmaximal}) and (\ref{minimalmaximal2}).

If $x,y\in P$, we say $x$ \emph{covers} $y$, denoted by $x\succ y$, if $x>y$ and if no element $z\in P$ satisfies $x>z>y$. For example in $\text{Par}(6)$ we have
\begin{eqnarray}
6\succ51\succ42\succ\left\{\begin{array}{ll}
411\\
33
\end{array}
\right.\succ321\succ
\left\{\begin{array}{ll}
3111\\
222
\end{array}
\right.\succ2211\succ21111\succ111111.
\end{eqnarray}

Given $x>y$ in $P$, a \emph{chain from $x$ to $y$}, denoted by $C(x,y)$, is a (finite) sequence
$$x=x_0\succ x_1\succ\cdots\succ x_t=y,\qquad x_i\in P,$$
where $t$ is usually called the \emph{length} of the chain $C(x,y)$. If $P$ is \emph{finite} (as $\text{Par}(n)$ and $\Gamma(n,r)$ are), a simple induction argument indicates that any $x$ and $y$ with $x>y$ can be connected by some chain $C(x,y)$, which of course may not be unique (see Example \ref{example chain1} below). So for $x>y$ in a \emph{finite} poset $P$, define
\be\label{length}l(x,y):=\max\{t~|~C(x,y):~x=x_0\succ x_1\succ\cdots\succ x_t=y\}\ee
and call it the \emph{length from $x$ to $y$}. We remark that even the chain realizing the length $l(x,y)$ may not be unique (see Example \ref{example chain2} below).

Given any $\lambda>\mu$ in $\text{Par}(n)$, a method is given in \cite{GK} for finding a chain from $\lambda$ to $\mu$  realizing the length $l(\lambda,\mu)$. Also note that if $\lambda\in\Gamma(n,r)$ and $\mu$ is dominated by it, then by definition $\mu\in\Gamma(n,r)$. So for any $\lambda\in\Gamma(n,r)$ and any chain starting from it, all the elements in the chain belong to $\Gamma(n,r)$.
\begin{example}\label{example chain1}
Take $421>2221$ in $\text{Par}(7)$. There are exactly two different chains from $421$ to $2221$ (see \cite[p.2]{GK})
\begin{eqnarray}\label{sequence}
\left\{\begin{array}{ll}
421\succ 4111\succ3211\succ 2221\\
~\\
421\succ331\succ322\succ3211\succ2221
\end{array}
\right.
\end{eqnarray}
and thus $l(421,2221)=4.$
\end{example}

\begin{example}\label{example chain2}
Consider $7>4111\in\text{Par}(7)$. There are exactly two different chains from $7$ to $4111$ realizing $l(7,4111)=5$ (see \cite[p.2]{GK}):
\begin{eqnarray}\label{sequence2}
\left\{\begin{array}{ll}
C_1(7,4111):&\qquad 7\succ61\succ52\succ43\succ421\succ4111,\\
~\\
C_2(7,4111):&\qquad 7\succ61\succ52\succ511\succ421\succ4111.
\end{array}
\right.\nonumber
\end{eqnarray}
\end{example}

\section{Proof of Theorem \ref{firstmainresult1}}\label{section-proof of firstmainresult}
When $\lambda$ covers $\mu$ in $\text{Par}(n)$ is characterized in the next proposition (see \cite[Prop. 2.3]{Br}).
\begin{proposition}\label{cover lemma}
Let $\lambda=(\lambda_1,\lambda_2,\ldots)$ and $\mu=(\mu_1,\mu_2,\ldots)$ belong to $\text{Par}(n)$. Then $\lambda\succ\mu$ if and only if there exist two indices $i<j$ such that
\begin{eqnarray}\label{cover1}
\left\{\begin{array}{ll}
\lambda_i=\mu_i+1\\
\lambda_j=\mu_j-1\\
\lambda_k=\mu_k,\qquad\text{if $k\neq i,j$}
\end{array}
\right.
\end{eqnarray}
%\be\label{cover1}
%\text{$\lambda_i=\mu_i+1$,\qquad$\lambda_j=\mu_j-1$, \qquad and\qquad $\lambda_k=\mu_k$ if $k\neq i,j$;}\ee
and
\be\label{cover2}
\text{either $j=i+1$ or $\mu_i=\mu_j$.}\ee
In other words, $\lambda\succ\mu$ if and only if $\mu$ is obtained from $\lambda$ by decreasing a part of $\lambda$ by one and adding it to next nearest available part.
\end{proposition}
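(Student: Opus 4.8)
The plan is to translate the dominance ordering into a statement about partial sums, and then read off the covering relation from the profile of a single nonnegative sequence. Write $\lambda^{(k)}:=\sum_{i=1}^{k}\lambda_i$ and set $d_k:=\lambda^{(k)}-\mu^{(k)}$. By definition $\lambda\geq\mu$ is equivalent to $d_k\geq 0$ for all $k$, and since $\lambda,\mu$ are both partitions of $n$ we have $d_0=0$ and $d_k=0$ for all large $k$. I would prove the two implications separately, regarding (\ref{cover1}) and (\ref{cover2}) together as the single ``box-move'' description in the last sentence of the statement.

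For sufficiency, assume (\ref{cover1}) and (\ref{cover2}). A direct computation gives $d_k=1$ for $i\leq k\leq j-1$ and $d_k=0$ otherwise, so $\lambda>\mu$. Suppose some $\nu$ satisfied $\lambda>\nu>\mu$. Then $e_k:=\nu^{(k)}-\mu^{(k)}$ is squeezed into $\{0,1\}$, is supported on $[i,j-1]$, and is neither identically $0$ nor identically $1$ there. The requirement that $\nu$ be weakly decreasing translates into a concavity condition on $(e_k)$, which forces $e$ to be the indicator function of a single subinterval $[a,b]\subseteq[i,j-1]$. This is exactly where (\ref{cover2}) enters: the flatness $\mu_i=\cdots=\mu_j$ (or the degenerate case $j=i+1$) makes any \emph{proper} subinterval create an ascent in $\nu$, contradicting that $\nu$ is a partition, and thus forces $[a,b]=[i,j-1]$, i.e. $\nu=\lambda$. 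Hence no strict intermediate exists and $\lambda\succ\mu$.

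For necessity, suppose $\lambda\succ\mu$. Since $\lambda>\mu$, I would pick a maximal interval $[s,t]$ on which $d_k\geq1$, so that $d_{s-1}=d_{t+1}=0$. The boundary identities $\lambda_{s-1}-\mu_{s-1}=-d_{s-2}\leq0$ and $\lambda_{t+2}-\mu_{t+2}=d_{t+2}\geq0$, together with $\lambda$ being weakly decreasing, force $\mu_{s-1}>\mu_s$ and $\mu_{t+1}>\mu_{t+2}$; this is precisely what is needed for the box move ``increase $\mu_s$ by one, decrease $\mu_{t+1}$ by one'' to yield a genuine partition $\nu$. By construction $\nu^{(k)}=\mu^{(k)}+1$ on $[s,t]$ and $\nu^{(k)}=\mu^{(k)}$ elsewhere, so $\mu<\nu\leq\lambda$. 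Since $\lambda\succ\mu$ forbids any strict intermediate, $\nu=\lambda$, which is exactly (\ref{cover1}) with $i=s$, $j=t+1$. Finally I would establish (\ref{cover2}) contrapositively: if $j>i+1$ and $\mu_i>\mu_j$, then the run of value $\mu_i$ ends strictly before row $j$, or the run of value $\mu_j$ begins strictly after row $i$, and the associated within-run box move produces a partition lying strictly between $\mu$ and $\lambda$, contradicting the covering.

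I expect the main obstacle to lie in the necessity direction, and within it in the verification of (\ref{cover2}): one must follow the run structure of $\mu$ between rows $i$ and $j$ through a short case analysis (according to whether the top or the bottom run there is a singleton) in order always to locate a legitimate \emph{shorter} box move. The partial-sum reformulation is the device that keeps both the ``no intermediate'' arguments and the partition-validity checks manageable, since the latter reduce entirely to the sign pattern of $(d_k)$ at the endpoints of the interval $[s,t]$.
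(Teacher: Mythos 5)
Your proposal is correct, and it proves strictly more than the paper does. The paper deliberately establishes only the implication it later needs, namely that $\lambda\succ\mu$ forces (\ref{cover1}), and refers to Brylawski \cite{Br} for the rest of the equivalence; on that implication your argument is essentially the paper's argument in different notation: your maximal interval $[s,t]$ of the support of $(d_k)$ is the paper's pair of indices (the first index where $\lambda_i>\mu_i$ and the first later index where the partial sums re-equalize), your boundary inequalities $\mu_{s-1}>\mu_s$ and $\mu_{t+1}>\mu_{t+2}$ are exactly the paper's intermediate assertion, and both proofs finish by squeezing the box-move partition $\nu$ between $\mu$ and $\lambda$ and letting the cover relation force $\nu=\lambda$. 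What you add --- the sufficiency of (\ref{cover1}) together with (\ref{cover2}), and the necessity of (\ref{cover2}) --- is precisely what the paper outsources to \cite{Br}, so your treatment is self-contained where the paper's is not, at the cost of the extra case analysis; the paper's choice buys brevity, since only the one direction is used later (in Propositions \ref{proposition assertion} and \ref{crucial estimate}). Two details in your added parts should be stated more carefully. First, weak decrease of $\nu$ gives $2e_k-e_{k-1}-e_{k+1}\geq\mu_{k+1}-\mu_k$, which is a genuine concavity condition on $(e_k)$ only where $\mu$ is flat; thus the flatness $\mu_i=\cdots=\mu_j$ supplied by (\ref{cover2}) (or the degenerate case $j=i+1$) must be invoked already to rule out gaps in the support of $(e_k)$, not merely afterwards to pin the endpoints of the support interval down to $[i,j-1]$. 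Second, in the contrapositive proof of (\ref{cover2}) you need the inequalities $\mu_{i-1}>\mu_i$ and $\mu_j>\mu_{j+1}$ (which hold because $\lambda$ is itself a partition) to certify that your shorter box move produces a legitimate partition; with these recorded, choosing the first descent of $\mu$ in $[i,j-1]$ and moving the box across either the initial run or the final run does always produce a partition strictly between $\mu$ and $\lambda$, contradicting the cover, exactly as you sketch.
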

\begin{remark}
Take the cover relations in (\ref{sequence}) as an example. ``$4111\succ3211$" fits into the case $j=i+1$, $``3211\succ2221$" fits into the case $\mu_i=\mu_j$, and ``$421\succ331$" fits into both $j=i+1$ and $\mu_i=\mu_j$.
``$421>322$" obeys (\ref{cover1}) but does not satisfy  (\ref{cover2}) and so another partition can be inserted: $421\succ331\succ322.$
\end{remark}
\begin{proof}
Some other notions and properties in \cite{Br} are involved in the proof of \cite[Prop. 2.3]{Br}. In the sequel what we really need at places is only the direction that ``$\lambda\succ\mu$" implies (\ref{cover1}). For the reader's convenience, we provide a quite direct proof for this direction.% (see also \cite[(1.16)]{Ma}).

Assume that $\lambda\succ\mu$. Let $i$ be the first index such that $\lambda_i>\mu_i$. Namely,
\begin{eqnarray}\label{1}
\left\{\begin{array}{ll}
\lambda_k=\mu_k,\qquad \text{for $1\leq k<i$,}\\
~\\
\lambda_i>\mu_i.
\end{array}
\right.
\end{eqnarray}
And let $j>i$ be the least integer for which $\lambda_1+\cdots+\lambda_i=\mu_1+\cdots+\mu_i$. Namely,
\begin{eqnarray}\label{2}
\left\{\begin{array}{ll}
\sum_{k=1}^t\lambda_k>\sum_{k=1}^t\mu_k,\qquad \text{for $i\leq t<j$,}\\
~\\
\sum_{k=1}^j\lambda_k=\sum_{k=1}^j\mu_k.
\end{array}
\right.
\end{eqnarray}

We assert that
\be\label{3}\text{$\mu_{i-1}>\mu_i$,\qquad and\qquad $\mu_j>\mu_{j+1}.$}\ee
Indeed, by (\ref{1}) we have $\mu_{i-1}=\lambda_{i-1}\geq\lambda_i>\mu_i$. On the other hand,
\be\begin{split}\mu_j\overset{(\ref{2})}{>}\lambda_j\geq\lambda_{j+1}&=
\sum_{k=1}^{j+1}\lambda_k-\sum_{k=1}^j\lambda_k\\
&\geq\sum_{k=1}^{j+1}\mu_k-\sum_{k=1}^j\mu_k\qquad\big(\text{by
(\ref{dominance ordering}) and (\ref{2})}\big)\\
&=\mu_{j+1}
\end{split}\nonumber\ee
This completes the proof of (\ref{3}). The two strict inequalities in (\ref{3}) imply that
\be\label{4}\nu:=(\mu_1,\ldots,\mu_{i-1},\mu_i+1,\mu_{i+1},\ldots,\mu_{j-1},
\mu_{j}-1,\mu_{j+1},\ldots)\ee
is still a partition of $n$. It is immediate to see that $\lambda\geq\nu>\mu$ and hence $\lambda=\nu$, which yields the desired proof of (\ref{cover1}).
\end{proof}
\begin{remark}
Strictly speaking, in the proof above we implicitly assume in (\ref{1}) and (\ref{3}) that the index $i\geq2$. Nevertheless, even if $i=1$, the proof still works, as easily checked.
\end{remark}

A useful consequence of the assertion (\ref{cover1}) in Proposition \ref{cover lemma} is the following fact, which will be used in the proof of Theorem \ref{secondmainresult}.
\begin{corollary}\label{corollary cover}
If $\lambda\succ\mu$ in $\text{\text{Par}(n)}$ or $\Gamma(n,r)$, then either $l(\lambda)=l(\mu)$ or $l(\lambda)=l(\mu)-1$, where the latter case occurs if and only if the index $j$ appearing in (\ref{cover1}) is exactly such that  $j=l(\mu)$ and $\mu_j=1$.
\end{corollary}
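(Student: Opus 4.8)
The plan is to read off everything from the structural description already established in Proposition \ref{cover lemma}, specifically the assertion (\ref{cover1}), which says that passing from $\lambda$ to the covered partition $\mu$ amounts to choosing two indices $i<j$ with $\mu_i=\lambda_i-1$, $\mu_j=\lambda_j+1$, and $\mu_k=\lambda_k$ for all other $k$. Since the length $l(\cdot)$ counts the number of nonzero parts, the only way the lengths of $\lambda$ and $\mu$ can differ is if this operation creates or destroys a single trailing zero part. Because we are \emph{decreasing} $\lambda_i$ and \emph{increasing} $\lambda_j$ with $i<j$, no part of $\lambda$ can drop to zero (decreasing $\lambda_i$ by one keeps it $\geq\lambda_{i+1}\geq\cdots>0$ under the partition ordering, and in any case $\lambda_i\geq\lambda_j+1\geq 2$ when $j\leq l(\lambda)$), so $\lambda$ never loses a part. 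The only possible change in length is therefore that $\mu$ acquires one \emph{extra} nonzero part relative to $\lambda$, which happens exactly when the index $j$ lands on a previously-zero position, i.e. $j=l(\lambda)+1$, so that $\lambda_j=0$ and $\mu_j=1$.

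Concretely, I would argue as follows. Set $l:=l(\lambda)$ and recall our convention $\lambda_k=0$ for $k>l$. First I would observe that $\mu_k=\lambda_k$ for $k\neq i,j$, that $\mu_i=\lambda_i-1$, and that $\mu_j=\lambda_j+1$. Since $i\leq l$ (otherwise $\lambda_i=0$ and $\mu_i=-1$ is impossible), and since by the dominance inequalities $\lambda_i>\lambda_j\geq 0$ forces $\lambda_i\geq 1$ even after subtracting one whenever $j\leq l$, the part in position $i$ stays positive; and every other part is unchanged except possibly position $j$. I would then split into the two cases $j\leq l$ and $j=l+1$. If $j\leq l$, then $\lambda_j\geq 1$, so $\mu_j=\lambda_j+1\geq 2>0$ and no new nonzero position is created; all of $\mu_1,\ldots,\mu_l$ are positive and $\mu_k=0$ for $k>l$, giving $l(\mu)=l=l(\lambda)$. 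If instead $j=l+1$, then $\lambda_j=0$ and $\mu_j=1$, so $\mu$ has exactly one more nonzero part than $\lambda$, giving $l(\mu)=l+1$, i.e. $l(\lambda)=l(\mu)-1$. This also pins down the stated characterization: the length-dropping case occurs precisely when $j=l(\lambda)+1$, equivalently $j=l(\mu)$ with $\mu_j=\mu_{l(\mu)}=1$.

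The one point requiring a little care is ruling out the possibility that $\lambda$ \emph{loses} a part, which would give $l(\mu)=l(\lambda)-1$ and contradict the claimed dichotomy. This is exactly where I must use that $i<j$ together with the shape of (\ref{cover1}): the only part that gets decremented is $\lambda_i$, and since $i<j\leq l+1$ we have $i\leq l$, while the weak-decreasing constraint on the partition $\mu$ (guaranteed because $\mu$ is itself a partition of $n$) forces $\mu_i=\lambda_i-1\geq\mu_{i+1}\geq 0$; combined with $\mu$ having no internal zeros among its first $l(\mu)$ entries, $\lambda_i-1$ cannot be zero unless it were the last part, which it is not since $j>i$ provides a later positive part. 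Thus $\lambda$ keeps all $l$ of its parts, and the only degree of freedom left is whether position $j$ is new.

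I do not expect any genuine obstacle here; the result is a purely combinatorial bookkeeping corollary of (\ref{cover1}), and the proof is essentially a case analysis on whether $j$ exceeds $l(\lambda)$. The mildly delicate step is phrasing the positivity argument for the decremented part $\mu_i$ cleanly enough to exclude a length drop, but this follows immediately from $\mu$ being a bona fide partition and from $i<j$. I would also note explicitly that the argument is identical in $\Gamma(n,r)$, since any partition dominated by an element of $\Gamma(n,r)$ again lies in $\Gamma(n,r)$, as already recorded in the discussion preceding Example \ref{example chain1}.
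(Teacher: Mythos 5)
Your proof is correct and follows exactly the route the paper intends: the paper states Corollary \ref{corollary cover} as an immediate consequence of (\ref{cover1}) without writing out details, and your case analysis on whether $j\leq l(\lambda)$ or $j=l(\lambda)+1$, using that $\mu$ is a genuine partition (weakly decreasing, no internal zeros) to rule out a length drop, is precisely the bookkeeping being left to the reader.
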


A fundamental result related to Schur polynomials is that, the product of two Schur polynomials is still a linear combination of Schur polynomials, whose coefficients are \emph{nonnegative integers} determined by the remarkable Littlewood-Richardson rule (\cite[p.142]{Ma}, \cite[p.267]{Fu2}). What we need in later proof is a consequence of this rule.
\begin{proposition}\label{positivity of Schur}
Let
\be\label{positive notation}FL(k,r):=\Big\{\sum_{\lambda\in\Gamma(k,r)} a_{\lambda}S_{\lambda}(c_1,\ldots,c_r)~\Big|~\text
{all $a_{\lambda}\in\mathbb{Z}_{\geq0}$ and some $a_{\lambda}>0$}\Big\},\ee
the positive lattice generated by Schur polynomials.
Then we have
\be\label{product of schur}FL(k_1,r)\cdot FL(k_2,r)\subset FL(k_1+k_2,r)\ee
Consequently, by (\ref{specialschur1}) we have
\be\label{positive of product of Chern classes}
c_{\lambda}=\prod_{i\geq1}c_{\lambda_i}\in FL(k,r)\qquad\text{if $\lambda\in\Gamma(k,r)$}.\ee
\end{proposition}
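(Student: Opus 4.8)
The plan is to deduce the product inclusion (\ref{product of schur}) directly from the Littlewood-Richardson rule just recalled, and then obtain the consequence (\ref{positive of product of Chern classes}) by iterating it. Throughout I would use the two facts recorded after the definition of $S_\lambda$: the family $\{S_\lambda \mid \lambda\in\Gamma(k,r)\}$ is a $\mathbb{Z}$-basis of the weight-$k$ homogeneous part of $\mathbb{Z}[c_1,\ldots,c_r]$, and $S_\lambda=0$ precisely when $\lambda_1>r$.

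By the bilinearity of multiplication it suffices to prove (\ref{product of schur}) on generators, that is, to show $S_\mu S_\nu\in FL(k_1+k_2,r)$ for $\mu\in\Gamma(k_1,r)$ and $\nu\in\Gamma(k_2,r)$. The Littlewood-Richardson rule gives $S_\mu S_\nu=\sum_\sigma a_\sigma S_\sigma$, the sum taken over $\sigma\in\mathrm{Par}(k_1+k_2)$ with all coefficients $a_\sigma\in\mathbb{Z}_{\geq0}$. Every $\sigma$ with $\sigma_1>r$ contributes $S_\sigma=0$ and may be discarded, so the surviving terms are indexed by $\Gamma(k_1+k_2,r)$ and their coefficients remain nonnegative. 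For arbitrary $P=\sum_\mu p_\mu S_\mu\in FL(k_1,r)$ and $Q=\sum_\nu q_\nu S_\nu\in FL(k_2,r)$ it then follows that $PQ=\sum_{\mu,\nu}p_\mu q_\nu\,S_\mu S_\nu$ is a $\mathbb{Z}_{\geq0}$-combination of the Schur polynomials indexed by $\Gamma(k_1+k_2,r)$.

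The only genuinely delicate point — and the step I expect to be the main obstacle — is the clause ``some $a_\lambda>0$'' built into the definition (\ref{positive notation}) of $FL$: nonnegativity of the coefficients merely places $PQ$ in the closed cone they span, whereas membership in $FL(k_1+k_2,r)$ additionally requires $PQ\neq0$. To secure this I would invoke that $\mathbb{Z}[c_1,\ldots,c_r]$ is an integral domain, so that the product of the nonzero polynomials $P$ and $Q$ is again nonzero; hence after expansion in the basis at least one coefficient is strictly positive, and $PQ\in FL(k_1+k_2,r)$. This establishes (\ref{product of schur}).

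Finally, (\ref{positive of product of Chern classes}) follows by induction on the length of $\lambda$. Given $\lambda=(\lambda_1,\ldots,\lambda_l)\in\Gamma(k,r)$, each part obeys $\lambda_i\leq\lambda_1\leq r$, so by (\ref{specialschur1}) the factor $c_{\lambda_i}=S_{(\lambda_i)}$ is a single Schur polynomial with index $(\lambda_i)\in\Gamma(\lambda_i,r)$ and hence lies in $FL(\lambda_i,r)$. Applying (\ref{product of schur}) repeatedly to $c_\lambda=\prod_{i\geq1}c_{\lambda_i}$ yields $c_\lambda\in FL\big(\sum_i\lambda_i,r\big)=FL(k,r)$, as desired.
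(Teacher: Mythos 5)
Your proof is correct and takes essentially the same route as the paper, which states the proposition as an immediate consequence of the Littlewood--Richardson rule cited just before it. The two points you elaborate --- discarding the terms $S_{\sigma}$ with $\sigma_1>r$ (which vanish) and using that $\mathbb{Z}[c_1,\ldots,c_r]$ is an integral domain to guarantee the ``some $a_{\lambda}>0$'' clause --- are exactly the details the paper leaves implicit.
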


With Propositions \ref{cover lemma} and \ref{positivity of Schur} in hand, we can now proceed to prove Theorem \ref{firstmainresult1}.

We shall show the assertion (\ref{firstmainresult1formula}) in Theorem \ref{firstmainresult1}. The nonnegativity in (\ref{firstmainresult1formula}) is well-known (see \cite[Cor.2.6]{DPS}) due to Theorem \ref{DPS inequality theorem} and (\ref{positive of product of Chern classes}):
$$\int_{M}c_{\lambda}(E)\wedge[\omega]^{n-k}\geq0,\qquad\forall\lambda\in
\Gamma(k,r).$$

In view of Theorem \ref{DPS inequality theorem} and the notation (\ref{positive notation}), Theorem \ref{firstmainresult1} follows from the next proposition.
\begin{proposition}\label{proposition assertion}
We have the following assertion.
\be\label{5}
\text{$\lambda>\mu$ in $\Gamma(k,r)$}\Longrightarrow c_{\mu}-c_{\lambda}\in FL(k,r).
\ee
Hence Theorem \ref{firstmainresult1} holds true.
\end{proposition}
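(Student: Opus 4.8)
The plan is to reduce assertion (\ref{5}) to the special case in which $\lambda$ \emph{covers} $\mu$, and then to carry out that single step by an explicit determinantal identity. Since $\Gamma(k,r)$ is a finite poset, any $\lambda>\mu$ can be joined by a saturated chain $\lambda=x_0\succ x_1\succ\cdots\succ x_t=\mu$, every member of which again lies in $\Gamma(k,r)$ (as noted after (\ref{length})). If I can establish $c_{x_{s+1}}-c_{x_s}\in FL(k,r)$ for each cover $x_s\succ x_{s+1}$, then the telescoping identity
\[
c_\mu-c_\lambda=\sum_{s=0}^{t-1}\big(c_{x_{s+1}}-c_{x_s}\big)
\]
finishes the proof, because $FL(k,r)$ is manifestly closed under addition (adding nonnegative-integer combinations of Schur polynomials keeps the coefficients nonnegative and at least one of them positive). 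Thus everything comes down to the cover case.

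For a cover $\lambda\succ\mu$ I would use the direction of Proposition \ref{cover lemma} that is proved there, namely (\ref{cover1}): there are indices $i<j$ with $\lambda_i=\mu_i+1$, $\lambda_j=\mu_j-1$, and $\lambda_k=\mu_k$ otherwise. Consequently $c_\lambda$ and $c_\mu$ agree outside the two slots $i,j$, and writing $P:=\prod_{k\neq i,j}c_{\lambda_k}$ for their common factor I get
\[
c_\mu-c_\lambda=P\cdot\big(c_{\lambda_i-1}c_{\lambda_j+1}-c_{\lambda_i}c_{\lambda_j}\big).
\]
Since $\mu$ is weakly decreasing and $i<j$, we have $\mu_i\geq\mu_j$, i.e. $\lambda_i-1\geq\lambda_j+1$; combined with $\lambda_i\leq r$ and $\lambda_j\geq0$ this shows $(\lambda_i-1,\lambda_j+1)$ is a bona fide partition in $\Gamma(\lambda_i+\lambda_j,r)$. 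Comparing with (\ref{specialschur2}) I recognise the bracket as a single two-row Schur polynomial,
\[
c_{\lambda_i-1}c_{\lambda_j+1}-c_{\lambda_i}c_{\lambda_j}=S_{(\lambda_i-1,\,\lambda_j+1)}(c_1,\ldots,c_r),
\]
where the conventions $c_0=1$, $c_i=0$ for $i\notin[0,r]$ absorb the degenerate case $\lambda_j=0$ (the length-dropping cover of Corollary \ref{corollary cover}).

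Finally I would absorb the common factor. Letting $\rho$ be the partition formed by the parts $\lambda_k$ with $k\neq i,j$, (\ref{positive of product of Chern classes}) gives $P=c_\rho\in FL(k-\lambda_i-\lambda_j,r)$ (with $P=1$ when $\rho$ is empty, in which case $c_\mu-c_\lambda$ is already a single Schur polynomial), while $S_{(\lambda_i-1,\,\lambda_j+1)}\in FL(\lambda_i+\lambda_j,r)$. The product property (\ref{product of schur}) then places $P\cdot S_{(\lambda_i-1,\,\lambda_j+1)}$ in $FL(k,r)$, settling the cover case and hence (\ref{5}); Theorem \ref{firstmainresult1} follows at once by integrating against $[\omega]^{n-k}$ and invoking Theorem \ref{DPS inequality theorem}. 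The computation itself is short, so the conceptual weight of the argument sits in the two structural reductions: passing to covers so that only a minimal two-part modification must be examined, and recognising the resulting difference as exactly one two-row Schur polynomial. The only genuine point of care is checking that the index constraints make $S_{(\lambda_i-1,\,\lambda_j+1)}$ a legitimate (nonzero, correctly shaped) Schur polynomial---particularly in the boundary case $\lambda_j=0$---after which the Littlewood-Richardson positivity encapsulated in (\ref{product of schur}) supplies all the remaining positivity with no further input.
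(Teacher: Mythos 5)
Your proposal is correct and follows essentially the same route as the paper: reduce to the cover case via a saturated chain and telescoping, use the proved direction (\ref{cover1}) of Proposition \ref{cover lemma}, identify the resulting difference as the two-row Schur polynomial $S_{(\mu_i,\mu_j)}=S_{(\lambda_i-1,\lambda_j+1)}$ times a product of Chern classes via (\ref{specialschur1})--(\ref{specialschur2}), and conclude with the Littlewood--Richardson positivity (\ref{product of schur}). The only difference is notational (you index by the parts of $\lambda$ where the paper uses those of $\mu$), and your extra care about the boundary case $\lambda_j=0$ is a nice touch but covered by the same conventions the paper uses.
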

\begin{proof}
We may choose a chain from $\lambda$ to $\mu$ in $\Gamma(k,r)$: $$\lambda=\lambda^{(t)}\succ\lambda^{(t-1)}\succ\cdots\succ\lambda^{(1)}
\succ\lambda^{(0)}=\mu$$
and then
\be\label{6}c_{\mu}-c_{\lambda}=\sum_{p=0}^{t-1}\big(c_{\lambda^{(p)}}-
c_{\lambda^{(p+1)}}\big).\ee
In view of (\ref{6}) as well as (\ref{positive notation}), the assertion (\ref{5}) can be reduced to show the following statement.
\be\label{7}
\text{$\lambda\succ\mu$ in $\Gamma(k,r)$}\Longrightarrow c_{\mu}-c_{\lambda}\in FL(k,r).
\ee
Indeed, if $\lambda\succ\mu$, by Proposition \ref{cover lemma} there exist indices $i<j$ such that
\be\label{8}\lambda=(\mu_1,\ldots,\mu_{i-1},\mu_i+1,\mu_{i+1},\ldots,\mu_{j-1},
\mu_{j}-1,\mu_{j+1},\ldots).\ee
Therefore,
\be\label{0}\begin{split}
c_{\mu}-c_{\lambda}&=\prod_{p\geq1}c_{\mu_p}-\prod_{p\geq1}c_{\lambda_p}\\
&=(c_{\mu_i}c_{\mu_j}-c_{\mu_i+1}c_{\mu_j-1})\prod_{p\neq i,j}c_{\mu_p}\qquad\big(\text{by (\ref{8})}\big)\\
&=S_{(\mu_i,\mu_j)}(c_1,\ldots,c_r)\prod_{p\neq i,j}S_{(\mu_p)}(c_1,\ldots,c_r)\qquad\big(\text{by (\ref{specialschur1}) and (\ref{specialschur2})}\big)\\
&\subset FL(k,r).\qquad\big(\text{by (\ref{positive notation}) and (\ref{product of schur})}\big)
\end{split}
\ee
This completes the proof of (\ref{7}) and hence Proposition \ref{proposition assertion}, and thus yields the desired proof for Theorem \ref{firstmainresult1}.
\end{proof}
\begin{remark}
%The proof above can be carried over verbatim to yield the proof of Proposition \ref{firstmainresult2}, with only the exception that all the equalities in the ample case are strict. Also
Note that in the course of the proof, we don't use the full strength of Proposition \ref{cover lemma}, but only need its first assertion (\ref{cover1}), as we have mentioned in the proof of Proposition \ref{cover lemma}.
\end{remark}

\section{The lower bound $B(\lambda)$}\label{section-the lower bound quantity B}
In this section we will introduce and discuss the lower bound quantity $B(\lambda)$ appearing in Theorem \ref{secondmainresult}.

As in (\ref{minimalmaximal2}) still denote by $\hat{1}=(n)$ the maximal value in $\text{Par}(n)$ with respect to the dominance ordering.
%$$\Gamma(n,r)\ni\hat{1}:=(\underbrace{r,\ldots,r}_{\lfloor\frac{n}{r}\rfloor}
%,n-r\lfloor\frac{n}{r}\rfloor)
%\xlongequal{n\leq r}(n).$$
\begin{definition}\label{def of B}
Let $B(\hat{1}):=1$.
If $\lambda\in\text{Par}(n)-\{\hat{1}\}$, we choose a \emph{longest} chain $C(\hat{1},\lambda)$ in $\text{Par}(n)$ from the maximal element $\hat{1}$ to $\lambda$ realizing $l(\hat{1},\lambda)$, the length from $\hat{1}$ to $\lambda$ \big(recall (\ref{length})\big).
\be\label{longest chain}C(\hat{1},\lambda):\qquad\hat{1}=\lambda^{(l(\hat{1},\lambda))}\succ
\lambda^{(l(\hat{1},\lambda)-1)}\succ\cdots\succ\lambda^{(1)}\succ
\lambda^{(0)}=\lambda.\ee
Then
\be\label{B}B\big(C(\hat{1},\lambda)\big):=
1+\sum_{i=0}^{l(\hat{1},\lambda)-1}
2^{l(\lambda^{(i)})-2}\ee
and
\be\label{BB}B(\lambda):=\max\big\{B\big(C(\hat{1},\lambda)\big)~\big|~
\text{longest chains $C(\hat{1},\lambda)$ from $\hat{1}$ to $\lambda$}\big\}.\ee
%\begin{eqnarray}\label{B}
%B(\lambda):=\left\{\begin{array}{ll}
%1+\sum_{i=0}^{l(\hat{1},\lambda)-1}
%2^{l(\lambda^{(i)})-2},&\text{if $\lambda\neq\hat{1}$}\\
%~\\
%1.&\text{if $\lambda=\hat{1}$}
%\end{array} \right.
%\end{eqnarray}
Here we remind the reader that $l(\lambda^{(i)})$ is the length of the partition $\lambda^{(i)}$ defined in Section \ref{combinatorical notation}.
\end{definition}
\begin{remark}
Note that in $\text{Par}(n)$ the length $l(\lambda)=1$ if and only if $\lambda=(n)$. So we always have $l(\lambda^{(i)})\geq2$ whenever $i\leq l(\hat{1},\lambda)-1$. Therefore $B\big(C(\hat{1},\lambda)\big)\in\mathbb{Z}_{>0}$ and so does $B(\lambda)$.
\end{remark}

\begin{example}\label{example}
Consider $4111\in\text{Par}(7)$. Recall from Example \ref{example chain2} that there are exactly two longest chains from $\hat{1}$ to $4111$, $C_1(\hat{1},4111)$ and $C_2(\hat{1},4111)$.
Then
\begin{eqnarray}
\left\{\begin{array}{ll}
B\big(C_1(\hat{1},4111)\big)=1+2^2+2^1+3\times2^0=10\\
~\\
B\big(C_2(\hat{1},4111)\big)=1+2^2+2\times2^1+2\times2^0=11
\end{array}
\right.\nonumber
\end{eqnarray}
and therefore $B(4111)=11$.
\end{example}

Next we present an estimate for $B(\lambda)$.
\begin{lemma}\label{estimate for B}
We have
$$
B(\lambda)\geq2^{l(\lambda)-1}$$
for all $\lambda\in\text{Par}(n)$.
\end{lemma}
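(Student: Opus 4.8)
The plan is to prove the bound $B(\lambda)\geq 2^{l(\lambda)-1}$ by analyzing how the length $l(\lambda^{(i)})$ of the partitions behaves along any longest chain $C(\hat{1},\lambda)$ from the maximal element $\hat{1}=(n)$ down to $\lambda$. Since $B(\lambda)$ is defined as a maximum over such chains, it suffices to exhibit \emph{one} longest chain whose value $B\big(C(\hat{1},\lambda)\big)$ is at least $2^{l(\lambda)-1}$; indeed, it would be cleanest to show the lower bound holds for \emph{every} longest chain, which makes the maximum irrelevant and simplifies the argument.

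The key structural input is Corollary \ref{corollary cover}: along each covering step $\lambda^{(i+1)}\succ\lambda^{(i)}$, the length either stays the same or increases by exactly one, i.e. $l(\lambda^{(i)})=l(\lambda^{(i+1)})$ or $l(\lambda^{(i)})=l(\lambda^{(i+1)})+1$. So as we descend the chain from $\hat{1}$ (which has length $1$) to $\lambda$ (which has length $l(\lambda)$), the length is nondecreasing and goes from $1$ up to $l(\lambda)$ in unit steps. Consequently, for each value $m$ with $2\leq m\leq l(\lambda)$, there is at least one index $i$ in the range $0\leq i\leq l(\hat{1},\lambda)-1$ with $l(\lambda^{(i)})=m$; more precisely, because the length only ever increases by one, each intermediate length value $m\in\{2,\ldots,l(\lambda)\}$ is attained by some $\lambda^{(i)}$ with $i\leq l(\hat{1},\lambda)-1$ (the value $m=1$ is attained only at the top element $\hat{1}=\lambda^{(l(\hat{1},\lambda))}$, which is excluded from the sum).

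Given this, the plan is to bound the sum in (\ref{B}) from below by keeping, for each length value $m$ from $2$ to $l(\lambda)$, exactly one term $2^{l(\lambda^{(i)})-2}=2^{m-2}$. Discarding all other (nonnegative) terms yields
\[
B\big(C(\hat{1},\lambda)\big)=1+\sum_{i=0}^{l(\hat{1},\lambda)-1}2^{l(\lambda^{(i)})-2}\geq 1+\sum_{m=2}^{l(\lambda)}2^{m-2}=1+\big(2^{l(\lambda)-1}-1\big)=2^{l(\lambda)-1},
\]
using the geometric series $\sum_{m=2}^{l(\lambda)}2^{m-2}=2^{0}+2^{1}+\cdots+2^{l(\lambda)-2}=2^{l(\lambda)-1}-1$. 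Since this holds for any longest chain, taking the maximum in (\ref{BB}) preserves the inequality and gives $B(\lambda)\geq 2^{l(\lambda)-1}$. The edge case $\lambda=\hat{1}$ (where $l(\lambda)=1$) is immediate since $B(\hat{1})=1=2^{0}$.

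The main obstacle to make rigorous is the claim that each length value $m\in\{2,\ldots,l(\lambda)\}$ is genuinely attained among the indices $i\leq l(\hat{1},\lambda)-1$. This rests on two facts: that the length changes by at most one per covering step (Corollary \ref{corollary cover}), so no value can be skipped; and that the top element $\hat{1}=(n)$ is the unique partition of length $1$, so $m=1$ occurs \emph{only} at $i=l(\hat{1},\lambda)$, which is outside the summation range, while every value $m\geq 2$ must first appear at some strictly smaller index. I would spell this out by letting, for each $m\in\{2,\ldots,l(\lambda)\}$, the index $i_m$ be the \emph{largest} $i$ with $l(\lambda^{(i)})=m$; these indices are distinct, all lie in $\{0,\ldots,l(\hat{1},\lambda)-1\}$, and contribute the desired terms $2^{m-2}$. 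Beyond this bookkeeping the argument is elementary.
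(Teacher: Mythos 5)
Your proposal is correct and is essentially the paper's own proof: both use Corollary \ref{corollary cover} to show that the partition lengths along a longest chain change by at most one per covering step, hence sweep through every value $m\in\{2,\ldots,l(\lambda)\}$ at some index $i\leq l(\hat{1},\lambda)-1$ (the value $1$ occurring only at $\hat{1}$ itself), and then bound the sum in (\ref{B}) below by the geometric series $1+\sum_{m=2}^{l(\lambda)}2^{m-2}=2^{l(\lambda)-1}$. The only differences are cosmetic: your explicit choice of indices $i_m$ and the separate treatment of the edge case $\lambda=\hat{1}$ are bookkeeping the paper leaves implicit.
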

\begin{remark}
In the case in Example \ref{example}, we have
$$2^{l(\lambda)-1}=
2^{4-1}=8,$$
which is less than both $B\big(C_1(\hat{1},4111)\big)=10$ and $B\big(C_2(\hat{1},4111)\big)=11$.
\end{remark}
\begin{proof}
Let $C(\hat{1},\lambda)$ be a longest chain realizing $l(\hat{1},\lambda)$ as in (\ref{longest chain}) and for simplicity let $l_0:=l(\hat{1},\lambda)$:
\be\label{l0}C(\hat{1},\lambda):\qquad\hat{1}=\lambda^{(l_0)}\succ
\lambda^{(l_0-1)}\succ\cdots\succ\lambda^{(1)}\succ
\lambda^{(0)}=\lambda.\ee
Since $\lambda^{(i+1)}\succ\lambda^{(i)}$, Corollary \ref{corollary cover} implies that
\be\label{9}
l(\lambda^{(i+1)})\leq l(\lambda^{(i)})\leq l(\lambda^{(i+1)})+1.\ee

This tells us that, with respect to the index $i$, the sequence $l(\lambda^{(i)})$ is weakly decreasing and decreases at each step by at most one. Therefore each integer between $l(\lambda^{(0)})=l(\lambda)$ and $l(\lambda^{(l_0-1)})$ appears at least once among the sequence $l(\lambda^{(i)})$:
\be\label{10}\big\{l(\lambda),l(\lambda)-1,
\ldots,l(\lambda^{(l_0-1)})\big\}
\subset\big\{l(\lambda^{(i)})~:~0\leq i\leq l_0-1\big\}.\ee
Once again by (\ref{l0}) and (\ref{9}), $l(\lambda^{(l_0-1)})\leq l(\hat{1})+1=2$ and hence we have
\be\label{11}\big\{l(\lambda),l(\lambda)-1,
\ldots,2\big\}\subset\big\{l(\lambda),l(\lambda)-1,
\ldots,l(\lambda^{(l_0-1)})\big\}.\ee

The right-hand side of (\ref{B}) can now be estimated as follows.
\be\begin{split}
B\big(C(\hat{1},\lambda)\big)&=
1+\sum_{i=0}^{l_0-1}
2^{l(\lambda^{(i)})-2}\\
&\geq1+\sum_{j=2}^{l(\lambda)}2^{j-2}
\qquad\big(\text{by (\ref{10}) and (\ref{11})}\big)\\
&=2^{l(\lambda)-1},
\end{split}\nonumber\ee
which gives the desired proof for Lemma \ref{estimate for B}.
\end{proof}

\section{Proof of Theorem \ref{secondmainresult}}\label{section-Proof of secondmainresult}
We begin to prove Theorem \ref{secondmainresult} in this section. To this end, we first introduce a notation regarding the positive lattice $FL(n,r)$ in Proposition \ref{positivity of Schur}.
\begin{definition}\label{def weight}
The \emph{weight} $\mathcal{W}(P)$ of any $P\in FL(n,r)$ \big(recall (\ref{positive notation})\big) is defined to be the sum of the (positive-integer) coefficients in front of the Schur polynomials. To be more precise, if $$P(c_1,\ldots,c_r)=\sum_{\lambda\in\Gamma(n,r)} a_{\lambda}S_{\lambda}(c_1,\ldots,c_r)\in FL(n,r),$$ then
$$
\mathcal{W}\big(P(c_1,\ldots,c_r)\big):=\sum_{\lambda\in\Gamma(n,r)}a_{\lambda}\in\mathbb{Z}_{>0}.$$
\end{definition}

In view of Theorem \ref{FL inequality theorem} and Definition \ref{def weight}, it is immediate to yield
\begin{corollary}\label{estimate lemma}
Let $E$ be a rank $r$ ample vector bundle over an $n$-dimensional projective manifold $X$. Then we have
$$\int_XP\big(c_1(E),\ldots,c_r(E)\big)\geq\mathcal{W}\big(P(c_1,\ldots,c_r)\big)$$
for any $P(c_1,\ldots,c_r)\in FL(n,r)$.
\end{corollary}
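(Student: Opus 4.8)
The plan is to reduce the statement to the Fulton--Lazarsfeld positivity of Theorem \ref{FL inequality theorem} together with one integrality observation. First I would expand $P$ in the Schur basis as $P(c_1,\ldots,c_r)=\sum_{\lambda\in\Gamma(n,r)}a_\lambda S_\lambda(c_1,\ldots,c_r)$ with all $a_\lambda\in\mathbb{Z}_{\geq0}$, so that by Definition \ref{def weight} one has $\mathcal{W}(P)=\sum_{\lambda}a_\lambda$. Linearity of integration then gives
\[
\int_X P\big(c_1(E),\ldots,c_r(E)\big)=\sum_{\lambda\in\Gamma(n,r)}a_\lambda\int_X S_\lambda\big(c_1(E),\ldots,c_r(E)\big).
\]

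The key point I would isolate is that each $\int_X S_\lambda\big(c_1(E),\ldots,c_r(E)\big)$ is not merely positive but is a positive \emph{integer}, hence at least $1$. Indeed, since $c_i(E)\in H^{2i}(X;\mathbb{Z})$ are integral classes and the Schur polynomial $S_\lambda$ has integer coefficients, the class $S_\lambda\big(c_1(E),\ldots,c_r(E)\big)$ lies in $H^{2n}(X;\mathbb{Z})$; integrating an integral top-degree class over the compact manifold $X$ yields an integer. On the other hand, Theorem \ref{FL inequality theorem} guarantees that this integer is strictly positive for every $\lambda\in\Gamma(n,r)$. A positive integer is $\geq1$, and so $\int_X S_\lambda\geq1$.

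Combining the two facts finishes the argument: for each $\lambda$ we have $a_\lambda\int_X S_\lambda\geq a_\lambda$ (trivially when $a_\lambda=0$, and because $\int_X S_\lambda\geq1$ when $a_\lambda>0$), whence
\[
\int_X P\big(c_1(E),\ldots,c_r(E)\big)\geq\sum_{\lambda\in\Gamma(n,r)}a_\lambda=\mathcal{W}(P).
\]

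Since the corollary is a direct consequence of the already-established Fulton--Lazarsfeld inequality, I do not expect any serious obstacle; the only step requiring care is the passage from strict positivity to the bound $\geq1$, which rests on the integrality of the Chern classes and thus of $\int_X S_\lambda$. I would emphasize that this ``positive integer, hence $\geq1$'' step is precisely the mechanism upgrading the qualitative $>0$ into the quantitative weight lower bound, and it is exactly this quantitative form that feeds into the subsequent proof of Theorem \ref{secondmainresult}.
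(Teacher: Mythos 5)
Your proof is correct and follows exactly the route the paper intends: the paper derives this corollary as ``immediate'' from Theorem \ref{FL inequality theorem} and Definition \ref{def weight}, and your argument simply makes explicit the key mechanism --- integrality of the classes $S_\lambda\big(c_1(E),\ldots,c_r(E)\big)\in H^{2n}(X;\mathbb{Z})$ upgrades the Fulton--Lazarsfeld strict positivity $\int_X S_\lambda>0$ to $\int_X S_\lambda\geq 1$, after which summing with the nonnegative integer coefficients $a_\lambda$ gives the weight bound.
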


The following fact is an application of the Pieri's formula, which in turn is a special case of the general Littlewood-Richardson rule.
\begin{lemma}\label{lemma for Pieri's formula}
For any Schur polynomial $S_{\lambda}(c_1,\ldots,c_r)$ with $\lambda\in\Gamma(n,r)$, and any $1\leq i\leq r$ such that $\lambda_1+i\leq r$, we have
\be\label{weight estimate}
\mathcal{W}\big(c_i\cdot S_{\lambda}(c_1,\ldots,c_r)\big)\geq2.
\ee
\end{lemma}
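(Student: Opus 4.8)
The plan is to reduce the weight computation to a counting problem via Pieri's formula and then to exhibit two surviving Schur terms. Since $c_i=S_{(i)}(c_1,\ldots,c_r)$ by (\ref{specialschur1}), the product $c_i\cdot S_\lambda$ is governed by the Pieri rule, the special case of the Littlewood--Richardson rule in which one factor is a single row. Concretely, $S_{(i)}\cdot S_\lambda=\sum_\mu S_\mu$, where $\mu$ runs over all partitions obtained from $\lambda$ by adding a horizontal strip of $i$ boxes (equivalently $\mu_1\ge\lambda_1\ge\mu_2\ge\lambda_2\ge\cdots$ with $\sum\mu=\sum\lambda+i$), and every coefficient equals $1$. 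Because we work modulo the relations $c_j=0$ for $j>r$, the $\mathbb{Z}$-basis property of the Schur polynomials forces $S_\mu=0$ whenever $\mu_1>r$, i.e. $\mu\notin\Gamma(n+i,r)$. Hence, by Definition \ref{def weight},
\[
\mathcal{W}\big(c_i\cdot S_\lambda(c_1,\ldots,c_r)\big)=\#\big\{\mu~:~\mu/\lambda \text{ a horizontal } i\text{-strip},~\mu_1\le r\big\},
\]
so it suffices to produce two distinct admissible $\mu$.

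The first witness is $\mu^{(1)}:=(\lambda_1+i,\lambda_2,\lambda_3,\ldots)$, obtained by placing all $i$ new boxes in the first row; this is plainly a horizontal strip, and the hypothesis $\lambda_1+i\le r$ guarantees $\mu^{(1)}_1\le r$, so $S_{\mu^{(1)}}$ survives.

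For the second witness I would move exactly one box off the first row. Let $k\ge 2$ be the least index with $\lambda_{k-1}>\lambda_k$ (using the convention $\lambda_{l(\lambda)+1}=0$, so that such a $k$ exists and $k\le l(\lambda)+1$), and set $\mu^{(2)}:=(\lambda_1+i-1,\lambda_2,\ldots,\lambda_{k-1},\lambda_k+1,\lambda_{k+1},\ldots)$. All rows other than the first and the $k$-th are unchanged, so the only nontrivial inequality to verify for $\mu^{(2)}/\lambda$ to be a horizontal strip is $\mu_k=\lambda_k+1\le\lambda_{k-1}$, which is exactly the descent $\lambda_{k-1}>\lambda_k$ secured by the choice of $k$. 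Moreover $\mu^{(2)}_1=\lambda_1+i-1\le r-1<r$, so $S_{\mu^{(2)}}$ also survives. Since the two witnesses place the displaced box in different rows, $\mu^{(2)}\ne\mu^{(1)}$, and therefore $\mathcal{W}\ge 2$.

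The only delicate point is the existence and uniform treatment of the index $k$: I expect the main obstacle to be the rectangular case $\lambda_1=\cdots=\lambda_{l(\lambda)}$, where there is no descent among the nonzero parts and one is forced to take $k=l(\lambda)+1$, so that $\mu^{(2)}$ acquires a new bottom row of length one. Checking that this still forms a legitimate horizontal strip, and that $\mu^{(2)}\ne\mu^{(1)}$ (now by comparing lengths rather than rows), requires a little care, but becomes routine once the convention $\lambda_{l(\lambda)+1}=0$ is in force. Throughout, the hypothesis $\lambda_1+i\le r$ is used precisely to keep both witnesses inside $\Gamma(n+i,r)$; it is exactly the condition preventing the dominant term $\mu^{(1)}$ from being annihilated by the relation $c_j=0$ for $j>r$.
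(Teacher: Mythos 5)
Your proof is correct and follows essentially the same route as the paper: apply Pieri's formula, discard the terms $S_{\mu}$ with $\mu_1>r$, and exhibit two surviving partitions in $\Gamma(n+i,r)$, with the first witness $(\lambda_1+i,\lambda_2,\ldots,\lambda_l)$ identical in both arguments. The only difference is the second witness: the paper always takes $(\lambda_1+i-1,\lambda_2,\ldots,\lambda_l,1)$, appending a new bottom row of one box (legitimate since $\lambda_l\geq1$, and distinct from the first witness since the first rows differ), which avoids your descent index $k$ and the separate rectangular case entirely.
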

\begin{proof}
Let $\lambda=(\lambda_1,\ldots,\lambda_l)$ with $l=l(\lambda)$. The Pieri's formula (\cite[p.264]{Fu2}, \cite[p.339]{Sta99}) tells us that
\be\label{12}
c_i\cdot S_{\lambda}(c_1,\ldots,c_r)=\sum_{\mu}S_{\mu}(c_1,\ldots,c_r),
\ee
where the sum is over the partitions $$\mu=(\mu_1,\ldots,\mu_l,\mu_{l+1})\in\Gamma(n+i,r)\qquad\text{\big(i.e., $l(\mu)\leq l+1$, $\mu_1\leq r$, and $\sum_{i=1}^{l+1}\mu_i=n+i$\big)}$$ such that
\be\label{13}\mu_1\geq\lambda_1\geq\mu_2\geq\lambda_2\geq\cdots
\geq\mu_{l-1}\geq\lambda_{l-1}\geq\mu_l\geq\lambda_l\geq\mu_{l+1}\geq0.\ee
Note that the two partitions $\mu=(\lambda_1+i,\lambda_2,\ldots,\lambda_l)$ and  $\mu=(\lambda_1+i-1,\lambda_2,\ldots,\lambda_l,1)$ both belong to $\Gamma(n+i,r)$, due to the assumption $\lambda_1+i\leq r$, and satisfy (\ref{13}), and hence appear on the right-hand side of (\ref{12}).
\end{proof}
\begin{remark}
The restriction condition $\lambda_1+i\leq r$ in Lemma \ref{lemma for Pieri's formula} is \emph{essential}. For instance,
$c_1c_r=S_{(r,1)}(c_1,\ldots,c_r)$ due to Example \ref{example schur} and the fact that $c_{r+1}=0$.
\end{remark}

With the aid of Lemma \ref{lemma for Pieri's formula} the crucial estimate below can be established.
\begin{proposition}\label{crucial estimate}
If $n\leq r$ and $\lambda\succ\mu$ in $\text{Par}(n)=\Gamma(n,r)$, then
$$\mathcal{W}\big(c_{\mu}(c_1,\ldots,c_r)-
c_{\lambda}(c_1,\ldots,c_r)\big)\geq2^{l(\mu)-2}.$$
\end{proposition}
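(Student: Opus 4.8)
The plan is to start from the explicit factorization of $c_\mu-c_\lambda$ that already appeared in~(\ref{0}), and then to track how the weight $\mathcal{W}$ grows as this product is assembled one Chern class at a time, invoking Lemma~\ref{lemma for Pieri's formula} at each step.

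First I would record the shape of $\lambda$. Since $\lambda\succ\mu$, the first assertion~(\ref{cover1}) of Proposition~\ref{cover lemma} (which is all we need) furnishes indices $i<j$ with $\lambda_i=\mu_i+1$, $\lambda_j=\mu_j-1$ and $\lambda_k=\mu_k$ for $k\ne i,j$. Here $j\le l(\mu)$ because $\mu_j\ge1$, and $l(\mu)\ge2$ because $\mu<\hat{1}=(n)$, so the target exponent $l(\mu)-2$ is a nonnegative integer. Exactly as in~(\ref{0}) this yields
\[
c_\mu-c_\lambda=S_{(\mu_i,\mu_j)}(c_1,\dots,c_r)\cdot\!\!\prod_{\substack{1\le p\le l(\mu)\\ p\ne i,j}}\!\! c_{\mu_p},
\]
the product of the single Schur polynomial $S_{(\mu_i,\mu_j)}$ (legitimate since $\mu_i\ge\mu_j\ge1$) with exactly $l(\mu)-2$ Chern classes $c_{\mu_p}$, each of positive index.

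The heart of the argument is to read off the weight by building this product up factor by factor. Beginning with $\mathcal{W}\big(S_{(\mu_i,\mu_j)}\big)=1$, I would multiply in the $l(\mu)-2$ factors $c_{\mu_p}$ one at a time. At each stage the current partial product lies in some $FL(m,r)$, hence is a combination $P=\sum_\nu a_\nu S_\nu$ with all $a_\nu\ge0$; multiplying by $c_{\mu_p}$ and expanding each term via Pieri's formula~(\ref{12}) introduces no cancellation, so the weight is additive and
\[
\mathcal{W}\big(c_{\mu_p}\cdot P\big)=\sum_\nu a_\nu\,\mathcal{W}\big(c_{\mu_p}S_\nu\big)\ge 2\sum_\nu a_\nu=2\,\mathcal{W}(P),
\]
where the inequality is Lemma~\ref{lemma for Pieri's formula} applied to each $S_\nu$. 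Granting the hypothesis of that lemma, each of the $l(\mu)-2$ multiplications at least doubles the weight, and since we start from weight $1$ we obtain $\mathcal{W}(c_\mu-c_\lambda)\ge 2^{l(\mu)-2}$; the case $l(\mu)=2$, where $c_\mu-c_\lambda=S_{(\mu_i,\mu_j)}$ has weight $1=2^0$, is included automatically.

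I expect the one genuine obstacle---and the only point at which the hypothesis $n\le r$ is used---to be verifying the Pieri room condition $\nu_1+\mu_p\le r$ for every Schur term $S_\nu$ arising at every intermediate stage, so that Lemma~\ref{lemma for Pieri's formula} really applies. This I would settle by a degree count: just before multiplying by $c_{\mu_p}$, the partial product has total degree $\mu_i+\mu_j$ plus the sum of the already-used parts, so any occurring $\nu$ satisfies $\nu_1\le\deg\nu\le n-\mu_p$, the last inequality holding because $\mu_p$ is still among the unused parts whose total with $\deg\nu$ is $n$. Hence $\nu_1+\mu_p\le n\le r$ throughout, independently of the order of multiplication, which legitimizes every application of Lemma~\ref{lemma for Pieri's formula} and completes the estimate.
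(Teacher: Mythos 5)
Your proof is correct and follows essentially the same route as the paper: the factorization $c_{\mu}-c_{\lambda}=S_{(\mu_i,\mu_j)}(c_1,\ldots,c_r)\prod_{p\neq i,j}c_{\mu_p}$ obtained from the cover relation, followed by repeated application of Lemma \ref{lemma for Pieri's formula} to double the weight with each factor. The paper states this last step in one line, so your explicit weight-additivity argument and the degree count showing $\nu_1+\mu_p\leq n\leq r$ at every intermediate stage merely spell out details the paper leaves implicit (and correctly locate where the hypothesis $n\leq r$ is used).
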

\begin{proof}
First note that $\mathcal{W}(c_{\mu}-c_{\lambda})$ is well-defined as $c_{\mu}-c_{\lambda}\in FL(n,r)$ due to Proposition \ref{proposition assertion}. By Proposition \ref{cover lemma} there exist indices $i<j$ such that
\begin{eqnarray}
\left\{\begin{array}{ll}
\lambda_i=\mu_i+1\\
\lambda_j=\mu_j-1\\
\lambda_k=\mu_k,\qquad\text{if $k\neq i,j$}
\end{array}
\right.\nonumber
\end{eqnarray}
%$$\text{$\lambda_i=\mu_i+1$, $\lambda_j=\mu_j-1$, and $\lambda_k=\mu_k$ if $k\neq i,j$}$$
and thus \big(recall (\ref{0})\big)
\be\label{14}c_{\mu}-c_{\lambda}=(c_{\mu_i}c_{\mu_j}-c_{\mu_i+1}c_{\mu_j-1})\prod_{p\neq i,j}c_{\mu_p}\overset{(\ref{specialschur2})}{=}S_{(\mu_i,\mu_j)}(c_1,\ldots,c_r)\prod_{p\neq i,j}c_{\mu_p}.\ee
Applying Lemma \ref{lemma for Pieri's formula} repeatedly to the right-hand side of (\ref{14}) yields that its weight is no less than $2^{l(u)-2}$ and then completes its proof.
\end{proof}

We are ready to prove Theorem \ref{secondmainresult}.
\begin{proof}
The lower bound $2^{l(\lambda)-1}$ for $B(\lambda)$ in Theorem \ref{secondmainresult} has been established in Lemma \ref{estimate for B}. What we need to show now is $\int_Xc_{\lambda}(E)\geq B(\lambda)$ for any $\lambda\in\text{Par}(n)$. With Corollary \ref{estimate lemma} in hand, in order to complete the proof of Theorem \ref{secondmainresult}, it suffices to show the following inequality.
\be\label{estimate for weight}\mathcal{W}(c_{\lambda})\geq B(\lambda),\qquad\text{for any $\lambda\in\text{Par}(n)$}.\ee

We focus on the proof of (\ref{estimate for weight}). Due to (\ref{positive of product of Chern classes}) and Definition \ref{def of B} we have $\mathcal{W}(c_n)\geq1=B(\hat{1})$ and so (\ref{estimate for weight}) holds true for the maximal element $\hat{1}=(n)$. In the sequel we assume that $\lambda\in\text{Par}(n)-\{\hat{1}\}$.

Let
\be\label{longest chain2}C(\hat{1},\lambda):\qquad\hat{1}=\lambda^{(l_0)}\succ
\lambda^{(l_0-1)}\succ\cdots\succ\lambda^{(1)}\succ
\lambda^{(0)}=\lambda\ee
be any longest chain in $\text{Par}(n)$ from $\hat{1}$ to $\lambda$ realizing the length $l(\hat{1},\lambda)=l_0$ \big(recall (\ref{length})\big). Then
\be\label{15}c_{\lambda}=c_{\hat{1}}+\sum_{i=0}^{l_0-1}
(c_{\lambda^{(i)}}-c_{\lambda^{(i+1)}}).\ee
Since each summand on the right-hand side of (\ref{15}) belongs to $FL(n,r)$, still due to Proposition \ref{proposition assertion}, we have
\be\label{16}
\begin{split}
\mathcal{W}(c_{\lambda})&=\mathcal{W}(c_{\hat{1}})+
\sum_{i=0}^{l_0-1}
\mathcal{W}(c_{\lambda^{(i)}}-c_{\lambda^{(i+1)}})\\
&\geq 1+\sum_{i=0}^{l_0-1}2^{l(\lambda^{(i)})-2}\qquad
\text{(by Proposition \ref{crucial estimate})}\\
&=B\big(C(\hat{1},\lambda)\big).\qquad\text{\big(by (\ref{B})\big)}
\end{split}.\ee
Since (\ref{16}) holds true for arbitrary longest chain $C(\hat{1},\lambda)$, we have
$$\mathcal{W}(c_{\lambda})\geq\max\{B\big(C(\hat{1},\lambda)\big)\}
\overset{(\ref{BB})}{=}B(\lambda).$$
This gives the desired proof of (\ref{estimate for weight}) and thus completes the proof of Theorem \ref{secondmainresult}.
\end{proof}

\begin{remark}
After finishing the proof of Theorem \ref{secondmainresult}, it is now clear where the restriction condition $n\leq r$ plays a role. The estimate inequality in Proposition \ref{crucial estimate} needs $n\leq r$, and its proof in turn can be traced back to Lemma \ref{lemma for Pieri's formula} where we need the condition $\lambda_1+i\leq r$.
\end{remark}

\section{Simultaneous positivity}\label{section-simultaneous positivity}
\subsection{Simultaneous positivity on compact complex
manifolds with semipositive (co)tangent bundles}
There are several different but closely related semipositivity notions for (holomorphic) vector bundles over (compact) complex manifolds: global generation, Griffiths semipositivity and nefness.
In a classical work \cite{BC} Bott and Chern (implicitly) raised another semipositivity notion for Hermitian vector bundles and the author termed it \emph{Bott-Chern semipositivity} in \cite[p.24]{LiMA} and studied the Chern forms and Chern numbers on such vector bundles in detail in \cite{LiMA}.  It turns out that Bott-Chern semipositivity interpolates between global generation and Griffiths semipositivity (\cite[(4.2),(4.3)]{LiMA}). In short, we have the following sequence for these four semipositivity notions.
\be\label{four different semipositivity}\text{Global generation $\Longrightarrow$ Bott-Chern semipositivity $\Longrightarrow$ Griffiths semipositivity $\Longrightarrow$ nefness.}\ee

When the base manifold is K\"{a}hler, for each case in (\ref{four different semipositivity}) its Chern numbers are \emph{all} nonnegative (see Corollary \ref{corollary}). For the case of Bott-Chern semipositivity (and hence global generation), the base manifold can even be non-K\"{a}hler (\cite[Thm 5.1]{LiMA}).

We are interested in the case when the vector bundle in question is the (holomorphic) tangent or cotangent bundle of the base manifold. Let $M$ be a compact (connected) complex manifold with $TM$ and $T^{\ast}M$ its tangent and cotangent bundle respectively. It is well-known that $TM$ is globally generated is exactly when $M$ is homogeneous. When $M$ is K\"{a}hler, $T^{\ast}M$ is globally generated if and only if $M$ is an immersed complex submanifold of some complex torus (\cite[p.271]{Sm}). By its very definition a Hermitian metric $h$ on $M$ is such that $(TM,h)$ or $(T^{\ast}M,h)$ is Griffiths semipositive if and only if its (holomorphic) \emph{bisectional curvature} is semipositive or nonpositive respectively. Before proceeding, we introduce the following definition.
\begin{definition}
Let $M$ be an $n$-dimensional compact complex manifold whose all Chern numbers (resp. signed Chern numbers) $c_{\lambda}[M]$ \big(resp. $(-1)^nc_{\lambda}[M]$\big) are known to be nonnegative, where $\lambda\in\text{Par}(n)$. We call $M$ \emph{simultaneously positive} if these (signed) Chern numbers either are all positive or all vanish.
\end{definition}

The author observed in \cite[Thm 7.3]{LiMA}, among other things, that an immersed complex submanifold of some complex torus is simultaneously positive, i.e., a K\"{a}hler manifold with cotangent bundle globally generated is simultaneously positive. Applying the structure and uniformization theorems for compact K\"{a}hler manifolds with semipositive bisectional curvature (\cite{HSW},\cite{Wu},\cite{Mok88}), the author and Zheng observed that such manifolds also are simultaneously positive (\cite[Thm 3.1]{LZ}). Note that in each proof of these two cases Corollary \ref{corollary} plays a decisive role.

In view of these known facts, it is reasonable to propose the following question.
\begin{question}\label{question}
Let $M$ be a compact complex, Hermitian or K\"{a}hler manifold such that $TM$ or $T^{\ast}M$ belongs to some of the cases in (\ref{four different semipositivity}).
Is $M$ simultaneously positive?
\end{question}

\subsection{K\"{a}hler manifolds with nonpositive bisectional curvature or nef tangent bundles}
In this subsection we discuss Question \ref{question} for compact K\"{a}hler manifolds with nonpositive bisectional curvature or nef tangent bundle.

The consideration of Question \ref{question} for K\"{a}hler manifolds with nonpositive bisectional curvature motivates us to propose the following conjecture (\cite[\S4]{LZ}), which can be regarded as a complex analogue to the famous Hopf conjecture.
\begin{conjecture}\label{conjecture}
Let $M$ be an n-dimensional compact K\"{a}hler manifold with
nonpositive bisectional curvature whose Ricci curvature is quasi-negative.
Then its signed Euler characteristic is positive:
$(-1)^nc_n[M]>0.$
\end{conjecture}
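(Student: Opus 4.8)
The plan is to reduce the conjecture, by means of the combinatorial machinery already established, to the \emph{simultaneous positivity} phenomenon of Question~\ref{question}, and then to isolate precisely the point where strict positivity must be injected. Recall that nonpositive holomorphic bisectional curvature is exactly the statement that $(T^{\ast}M,h)$ is Griffiths semipositive, so $T^{\ast}M$ is nef by (\ref{four different semipositivity}). Since $T^{\ast}M$ has rank $n$, taking $E=T^{\ast}M$ and $k=r=n$ in Corollary~\ref{corollary} (so that the $[\omega]^{n-k}$ factor disappears) and using $c_{\lambda}(T^{\ast}M)=(-1)^{n}c_{\lambda}(TM)$ for $\lambda\in\text{Par}(n)$, inequality (\ref{consequence1.1}) becomes
\[
0\leq(-1)^{n}c_{n}[M]\leq(-1)^{n}c_{\lambda}[M]\leq(-1)^{n}c_{1}^{n}[M],\qquad\forall\,\lambda\in\text{Par}(n).
\]
Thus the signed Euler characteristic $(-1)^{n}c_{n}[M]$ is already the \emph{minimum} of all signed Chern numbers, and all of them are nonnegative; the conjecture is precisely the assertion that this minimum is strictly positive.

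Next I would use the quasi-negativity hypothesis to force the \emph{maximal} signed Chern number to be strictly positive. Writing $\rho$ for the Ricci form, one has $[\rho]=2\pi c_{1}(M)=-2\pi c_{1}(T^{\ast}M)$, so the semipositive $(1,1)$-form $-\rho/2\pi$ represents $c_{1}(T^{\ast}M)$. Nonpositive Ricci means $-\rho\geq0$ everywhere, and quasi-negativity (negative definite at some point $p$) gives, by continuity, $-\rho>0$ on an open neighborhood $U$ of $p$. Hence
\[
(-1)^{n}c_{1}^{n}[M]=\int_{M}c_{1}(T^{\ast}M)^{n}=\frac{1}{(2\pi)^{n}}\int_{M}(-\rho)^{n}\geq\frac{1}{(2\pi)^{n}}\int_{U}(-\rho)^{n}>0,
\]
because $(-\rho)^{n}$ is a nonnegative volume form that is strictly positive on $U$. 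In particular the signed Chern numbers do \emph{not} all vanish.

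At this stage the conjecture would follow at once from an affirmative answer to Question~\ref{question} for the class of compact K\"{a}hler manifolds with nonpositive bisectional curvature: since the signed Chern numbers are all nonnegative and not all zero, simultaneous positivity would force every one of them --- in particular the minimal one $(-1)^{n}c_{n}[M]$ --- to be positive. \textbf{The main obstacle} is therefore exactly this simultaneous positivity, i.e.\ ruling out the ``partial vanishing'' scenario in which some signed Chern numbers are positive while others (including the Euler characteristic) vanish. In the dual setting of \emph{nonnegative} bisectional curvature such a dichotomy can be extracted from Mok's uniformization theorem, but no structure theorem of comparable strength is available on the nonpositive side, which is what makes the problem genuinely hard. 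I would pursue two complementary routes. The first is to establish a uniformization or splitting result for nonpositively curved K\"{a}hler manifolds with quasi-negative Ricci curvature, strong enough to feed into the simultaneous-positivity argument, or alternatively to show that such $M$ is K\"{a}hler hyperbolic in the sense of Gromov and invoke his theorem that $(-1)^{n}\chi(M)>0$. The second, more analytic, route is a direct Chern--Gauss--Bonnet computation: express $(-1)^{n}c_{n}[M]$ as the integral of the determinant of the curvature and attempt to show the integrand is pointwise of the correct sign under nonpositive bisectional curvature, with quasi-negativity upgrading the integral to a strict inequality. Here the difficulty is that the Gauss--Bonnet integrand is not a monotone function of the bisectional curvatures, since its mixed terms need not be sign-definite, so pointwise positivity cannot be read off directly and some cancellation must be controlled.
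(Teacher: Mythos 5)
The statement you set out to prove is an open \emph{conjecture} in the paper: the author offers no proof of it, records (in the proposition immediately following it, citing \cite[Prop.4.4]{LZ} and \cite[Cor.1.9]{LiMZ}) that it is known only when $n=2$, or when $n\leq4$ under the stronger hypothesis of nonpositive Riemannian sectional curvature, and explicitly remarks that the structure theorems of Wu--Zheng and Liu (\cite{WZ}, \cite{Liu}) are not strong enough to settle the general case. Your proposal, as you yourself admit, is not a proof either, so there is no paper proof to compare it against; what can be assessed is the partial content, and that content is correct. Nonpositive bisectional curvature means exactly that $(T^{\ast}M,h)$ is Griffiths semipositive, hence $T^{\ast}M$ is nef, so Corollary~\ref{corollary} applied to $E=T^{\ast}M$ with $k=r=n$, together with $c_{\lambda}(T^{\ast}M)=(-1)^{n}c_{\lambda}(TM)$ for $\lambda\in\text{Par}(n)$, gives
\[
0\leq(-1)^{n}c_{n}[M]\leq(-1)^{n}c_{\lambda}[M]\leq(-1)^{n}c_{1}^{n}[M],
\]
and quasi-negativity of the Ricci curvature does force $(-1)^{n}c_{1}^{n}[M]>0$ by your integration of $(-\rho)^{n}$ over a neighborhood where $-\rho$ is positive definite. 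This correctly reduces the conjecture to the simultaneous-positivity statement of Question~\ref{question} for this class of manifolds --- an equivalence the paper itself records via \cite[Remark 4.2]{LZ}.

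The genuine gap is that this reduction is where all of the difficulty lives, and you do not cross it: nothing in your argument excludes the scenario in which $(-1)^{n}c_{1}^{n}[M]>0$ while $(-1)^{n}c_{n}[M]=0$, and ruling that out \emph{is} the conjecture. Neither of your two suggested routes is carried out, nor is either available with current tools: (i) no uniformization or splitting theorem of the required strength exists on the nonpositively curved side (this is precisely the paper's remark that \cite{WZ} and \cite{Liu} do not suffice), and K\"{a}hler hyperbolicity in Gromov's sense is not known to follow from nonpositive bisectional curvature plus quasi-negative Ricci, so his theorem on the signed Euler characteristic cannot be invoked; (ii) the Chern--Gauss--Bonnet integrand is not known to be pointwise sign-definite under these curvature hypotheses, and you offer no mechanism for controlling the cancellation among its mixed terms that you correctly identify as the obstruction. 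In short, your proposal is an accurate reduction plus a research program, not a proof; the statement remains open after it, exactly as it does in the paper.
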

We record some results regarding Conjecture \ref{conjecture} into the following proposition for the reader's convenience.
\begin{proposition}
Let $M$ be an $n$-dimensional compact K\"{a}hler manifold with nonpositive bisectional curvature. Then $M$ is simultaneously positive if and only if Conjecture \ref{conjecture} holds true. Conjecture \ref{conjecture} is known to be true if either $n=2$, or $n\leq4$ and with the (stronger) condition that the K\"{a}hler metric be of nonpositive (Riemannian) sectional curvature.
\end{proposition}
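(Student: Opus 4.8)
The plan is to reduce the whole statement to the inequality chain of Corollary \ref{corollary}, applied not to an arbitrary nef bundle but to the cotangent bundle $T^{\ast}M$, and then to translate the two \emph{extremal} signed Chern numbers appearing there into the Euler characteristic of $M$ and the bigness of $K_M$. First I would note that nonpositive bisectional curvature means precisely that $(T^{\ast}M,h)$ is Griffiths semipositive, so $T^{\ast}M$ is nef; its rank is $r=n=\dim M$, so the hypothesis $n\leq r$ of Corollary \ref{corollary} holds (with equality, whence $\Gamma(n,r)=\text{Par}(n)$). Taking $E=T^{\ast}M$ and $k=n$ in (\ref{consequence1.1}), and using $c_i(T^{\ast}M)=(-1)^ic_i(M)$ so that $\int_Mc_{\lambda}(T^{\ast}M)=(-1)^nc_{\lambda}[M]$, I obtain
\be
0\leq(-1)^nc_n[M]\leq(-1)^nc_{\lambda}[M]\leq(-1)^nc_1^n[M],\qquad\forall\,\lambda\in\text{Par}(n).
\ee
This already shows that all signed Chern numbers are nonnegative (so the notion of simultaneous positivity applies), that the \emph{minimal} one is the signed Euler characteristic $(-1)^nc_n[M]$, and that the \emph{maximal} one is $(-1)^nc_1^n[M]=\int_Mc_1(K_M)^n$.

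The decisive point is a dictionary on the two ends of this chain. Writing $\rho$ for the Ricci form, nonpositive bisectional curvature gives $-\rho\geq0$ (so $K_M$ is nef), and I would argue that $\int_Mc_1(K_M)^n>0$ holds if and only if the Ricci curvature is quasi-negative: since $-\rho\geq0$ pointwise, the top self-intersection $\int_M(-\rho)^n$ is positive exactly when $-\rho$ is positive definite on an open set, which by continuity is equivalent to $-\rho$ being positive definite at a single point, i.e.\ to quasi-negativity; and for a nef class, bigness is equivalent to positive top self-intersection.

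Granting this dictionary, both directions of the equivalence in the first assertion follow by chasing the chain. If the conjecture holds: when $(-1)^nc_1^n[M]>0$ the dictionary makes the Ricci curvature quasi-negative, so $(-1)^nc_n[M]>0$ and the chain forces every signed Chern number to be positive; when instead $(-1)^nc_1^n[M]=0$ the chain forces every signed Chern number to vanish; either way $M$ is simultaneously positive. Conversely, if $M$ carries a metric with quasi-negative Ricci curvature then the maximal signed Chern number $(-1)^nc_1^n[M]$ is positive, so simultaneous positivity excludes the vanishing alternative and forces $(-1)^nc_n[M]>0$, which is exactly the conclusion of the conjecture.

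Finally, the low-dimensional claims of the second assertion are not combinatorial consequences of Corollary \ref{corollary}: they are genuine Hopf-type statements on the sign of the Euler characteristic, and I would simply invoke the known verifications — the complex surface case, and the complex dimension $\leq4$ (real dimension $\leq8$) case under the stronger hypothesis of nonpositive Riemannian sectional curvature, via Chern--Gauss--Bonnet together with the works cited. Thus the main obstacle is not the first assertion, which is a clean corollary of the inequality chain once the nef-versus-big dictionary is in place, but rather the content of the conjecture itself; the only delicate internal point is confirming that there is no intermediate ``gap'' case, i.e.\ that $(-1)^nc_1^n[M]=0$ genuinely propagates down the chain to kill \emph{all} signed Chern numbers and that quasi-negativity of the Ricci curvature coincides with bigness of $K_M$.
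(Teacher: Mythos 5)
Your proposal is correct, and it supplies in full an argument that the paper itself does not write out: the paper's entire proof of this proposition consists of citations, attributing the equivalence to \cite[Remark 4.2]{LZ} and the two low-dimensional verifications to \cite[Prop.4.4]{LZ} and \cite[Cor.1.9]{LiMZ}. Your route --- take $E=T^{\ast}M$ (nef because nonpositive bisectional curvature is precisely Griffiths semipositivity of $T^{\ast}M$), apply (\ref{consequence1.1}) with $k=n=r$, use $c_{\lambda}(T^{\ast}M)=(-1)^{n}c_{\lambda}(M)$ for $\lambda\in\text{Par}(n)$ to get the chain
\begin{equation*}
0\leq(-1)^{n}c_{n}[M]\leq(-1)^{n}c_{\lambda}[M]\leq(-1)^{n}c_{1}^{n}[M],
\end{equation*}
and then translate the top end via the pointwise dictionary ``$-\rho\geq0$ everywhere and $\int_M(-\rho)^{n}>0$ iff $-\rho>0$ at one point iff Ricci is quasi-negative'' --- is exactly the mechanism the paper alludes to when it says that Corollary \ref{corollary} ``plays a decisive role'' in these simultaneous-positivity results, so mathematically you and the cited reference are doing the same thing. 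Each step of your argument checks out: the sign computation, the two-case chase (if $(-1)^{n}c_{1}^{n}[M]>0$ the conjecture feeds positivity in at the bottom of the chain; if it vanishes the upper bound kills every signed Chern number), and the converse direction using quasi-negativity to rule out the all-vanishing alternative. You are also right, and appropriately explicit, that the $n=2$ and $n\leq4$ cases are genuine differential-geometric theorems that must be imported from \cite{LZ} and \cite{LiMZ} rather than deduced from the combinatorial chain; the paper does the same. The only trade-off: your version makes the equivalence verifiable within the paper, at the cost of the brevity the author achieves by deferring to the references.
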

\begin{proof}
The equivalence between simultaneous positivity and Conjecture \ref{conjecture} has been explained in \cite[Remark 4.2]{LZ}.
The two known cases were verified in \cite[Prop.4.4]{LZ} and \cite[Cor.1.9]{LiMZ} respectively.
\end{proof}
\begin{remark}
Although there is a structure theorem for compact K\"{a}hler manifolds with nonpositive bisectional curvature due to Wu-Zheng and Liu (\cite{WZ},\cite{Liu}), on which the proof of \cite[Prop.4.4]{LZ} depends, it is not strong enough to prove Conjecture \ref{conjecture} in its full generality.
\end{remark}

Next we discuss simultaneous positivity for compact K\"{a}hler manifolds with nef tangent bundle. In \cite{DPS}, the study of K\"{a}hler manifolds with nef tangent bundle was reduced to the case of Fano manifold. The Campana-Peternell conjecture (\cite{CP}) predicts that a Fano manifold with nef tangent bundle is rational homogeneous. For such manifolds we have
\begin{proposition}
Any $n$-dimensional compact K\"{a}hler manifold with nef tangent bundle is simultaneously positive if (and only if) the Euler characteristic of any Fano manifold with nef tangent bundle is positive. In particular, this is true when $n\leq5$.
\end{proposition}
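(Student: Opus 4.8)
The plan is to combine Corollary \ref{corollary} with the structure theory of compact Kähler manifolds with nef tangent bundle. Applying Corollary \ref{corollary} to $E=TM$ (so $r=n$) with $k=n$ gives, for every $\lambda\in\mathrm{Par}(n)$,
\[
0\le c_n[M]\le c_{\lambda}[M]\le c_1^{\,n}[M],
\]
so the topological Euler characteristic $c_n[M]$ is the \emph{smallest} and $c_1^{\,n}[M]=(-K_M)^n$ the \emph{largest} Chern number. Since every Chern number is nonnegative, $M$ fails to be simultaneously positive exactly when $c_n[M]=0<c_1^{\,n}[M]$. Hence it suffices to decide when $c_1^{\,n}[M]>0$ and to show that in that situation $c_n[M]>0$. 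Throughout I may pass to a finite étale cover, since these multiply every Chern number by the covering degree and therefore preserve simultaneous positivity.

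First I would invoke the Demailly--Peternell--Schneider structure theorem \cite{DPS}: after a finite étale cover, $M$ admits a smooth fibration $\pi\colon M\to A$ onto a complex torus $A=\mathrm{Alb}(M)$ whose fibres $F$ are Fano manifolds with nef tangent bundle. If $\dim A=0$, then $M=F$ is itself Fano with nef tangent bundle, so $c_1^{\,n}[M]=(-K_M)^n>0$ and the displayed chain shows that $M$ is simultaneously positive if and only if $c_n[M]=\chi(F)>0$. If $\dim A=a>0$, I claim all Chern numbers vanish. Indeed $-K_M=\det TM$ is nef, and $(-K_M)^n>0$ would make $-K_M$ nef and big; Kawamata--Viehweg vanishing would then give $h^{0,1}(M)=0$, contradicting $h^{1,0}(M)\ge h^{1,0}(A)=a>0$ (holomorphic $1$-forms pull back injectively along the surjection $\pi$). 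Therefore $c_1^{\,n}[M]=0$, and the above inequalities force $c_{\lambda}[M]=0$ for all $\lambda$; thus $M$ is simultaneously positive unconditionally.

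Combining the two cases yields the equivalence. The forward direction is the content above: every Kähler $M$ with nef tangent bundle is simultaneously positive provided every Fano manifold with nef tangent bundle has positive Euler characteristic, the only case that can go wrong being a fibre $F$ (or $M$ itself when $a=0$) with $\chi(F)=0$. For the converse I would note that any Fano $F$ with nef tangent bundle and $\dim F=f\le n$ occurs inside such a manifold: taking $F\times\mathbb{P}^{\,n-f}$, which is Fano of dimension $n$ with nef tangent bundle, simultaneous positivity forces $c_n=\chi(F)\,(n-f+1)>0$, whence $\chi(F)>0$.

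For the final clause I would appeal to the Campana--Peternell conjecture \cite{CP}, which is known up to dimension five: there every Fano manifold with nef tangent bundle is rational homogeneous $G/P$, and such a manifold has a cell decomposition into even-dimensional Schubert cells, so its Euler characteristic $\#(W_G/W_P)$ is strictly positive. The equivalence just proved then gives the $n\le5$ statement. The main obstacle is not the positivity/combinatorial input, which is entirely furnished by Corollary \ref{corollary}; it is the geometric reduction, resting on the DPS structure theorem and, for the low-dimensional unconditional part, on the known cases of the Campana--Peternell conjecture. The one genuinely delicate point internal to the argument is the vanishing $c_1^{\,n}[M]=0$ when $a>0$: I expect the cleanest route to be the nef-but-not-big dichotomy via Kawamata--Viehweg vanishing and the positivity of the irregularity, as sketched, rather than a direct fibre-integration, since $c(TM)=c(T_{M/A})$ alone (a bundle of rank $n-a$) does not by itself annihilate the top self-intersection.
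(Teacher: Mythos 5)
Your argument is correct, and its combinatorial half is exactly the paper's: Corollary \ref{corollary} applied to $E=TM$ (with $k=r=n$) pins $c_n[M]$ and $c_1^n[M]$ down as the minimal and maximal Chern numbers, so simultaneous positivity is equivalent to the single implication $c_1^n[M]>0\Rightarrow c_n[M]>0$. The divergence is in how that implication gets converted into a statement about Fano manifolds. The paper does it with one citation: for a compact K\"{a}hler manifold with nef tangent bundle, $c_1^n[M]>0$ is \emph{equivalent} to $M$ being Fano (\cite[Prop.3.10]{DPS}); the proposition then follows at once, with the $n\le 5$ case supplied by the Campana--Peternell conjecture in low dimensions (\cite{Wa}, \cite{Ka}), exactly as you conclude. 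You instead re-derive the needed direction of that equivalence from the DPS structure theorem: pass to a finite \'{e}tale cover fibred by its Albanese map over a torus $A$ with Fano fibres, treat $\dim A=0$ directly, and when $\dim A>0$ rule out $c_1^n>0$ by a nef-and-big plus Kawamata--Viehweg vanishing argument against $h^{1,0}\ge\dim A$. This is sound --- including your two supporting observations that finite \'{e}tale covers scale every Chern number by the degree (so preserve simultaneous positivity) and that naive fibre integration of $c(TM)=c(T_{M/A})$ would not suffice --- but it is longer and in effect reproves \cite[Prop.3.10]{DPS}, whose proof in \cite{DPS} runs along the same lines. One step you should make explicit: Kawamata--Viehweg vanishing is stated for projective manifolds, so before invoking it note that $-K_{\tilde M}$ nef with positive top self-intersection makes $\tilde M$ Moishezon (holomorphic Morse inequalities), hence projective since it is K\"{a}hler; alternatively cite a K\"{a}hler version of the vanishing theorem. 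Finally, your $F\times\mathbb{P}^{n-f}$ construction proves the parenthetical ``only if'' direction, which the paper's proof leaves implicit; that is a small but genuine addition.
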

\begin{proof}
By Corollary \ref{corollary}, an $n$-dimensional compact K\"{a}hler manifold $M$ with nef tangent bundle is simultaneously positive if (and only if) $c_1^n[M]>0$ implies $c_n[M]>0$. However, for such $M$, $c^n_1[M]>0$ is equivalent to $M$ being Fano (\cite[Prop.3.10]{DPS}). So $M$ is simultaneously positive if the Campana-Peternell conjecture is true. This conjecture is currently proved when $n\leq 5$ by combining \cite{Wa} (Picard number one) with \cite{Ka} (Picard number no less than two).
\end{proof}
\begin{remark}
We refer the reader to \cite{MOSWW} for a detailed survey on the Campana-Peternell conjecture. In \cite{AMW} several conjectures related to the Betti or Hodge numbers on Fano manifolds with nef tangent bundles are proposed, which are implied by the Campana-Peternell conjecture and imply the positivity of the Euler characteristic.
\end{remark}

\section{Proof of Theorem \ref{rationalhomogeneous}}\label{section-proofrationalhomogeneous}
The proof of Theorem \ref{rationalhomogeneous} essentially is a refinement from \cite[\S6]{LiMA}, in which our main concern is the relation between projectivity and Chern numbers for compact homogeneous manifolds. For the reader's convenience, we shall still give a detailed proof here.

In order to prove Theorem \ref{rationalhomogeneous}, it suffices to show that a compact (connected) homogeneous complex manifold $M$ with some Chern number $c_{\lambda}\neq0$ must be rational homogeneous. Namely, $M$ is of the form $G/P$ with $G$ a semi-simple complex Lie group and $G$ a parabolic subgroup. The proof can be divided into the following three lemmas.

\begin{lemma}\label{lemma1}
The first Chern class $c_1(M)$ is quasi-positive.
\end{lemma}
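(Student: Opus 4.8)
The plan is to produce an explicit semipositive representative of $c_1(M)$ and then promote it to a quasi-positive one by showing that a single nonzero Chern number cannot occur unless $\int_Mc_1(M)^n>0$.

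First I would use homogeneity to get global generation of $TM$: the fundamental vector fields of a transitive holomorphic action span $T_pM$ at every point $p$, so $TM$ is globally generated, and therefore so is the determinant line bundle $-K_M=\det TM$. Hence the linear system $|-K_M|$ is base-point free and defines a holomorphic morphism $\phi\colon M\to\mathbb{P}^N=\mathbb{P}\big(H^0(M,-K_M)^{\ast}\big)$ with $\phi^{\ast}\mathcal{O}(1)=-K_M$. Pulling back the Fubini-Study form yields a smooth semipositive $(1,1)$-form $\eta:=\phi^{\ast}\omega_{\mathrm{FS}}$ with $[\eta]=c_1(-K_M)=c_1(M)$. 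Thus $c_1(M)$ is automatically nef, and the entire content of the lemma is to find one point at which $\eta$ is strictly positive.

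Next I would reduce this to a numerical statement. Since $\eta$ is semipositive, at each point $\eta^n$ is a nonnegative multiple of the volume form, vanishing exactly where $\eta$ fails to be positive definite; consequently $\int_Mc_1(M)^n=\int_M\eta^n>0$ forces $\eta$ to be positive definite at some point of $M$, which is precisely quasi-positivity. So it suffices to prove $\int_Mc_1(M)^n>0$. Here the standing assumption enters: because $TM$ is globally generated it is Bott-Chern semipositive by (\ref{four different semipositivity}), so the upper bound for Chern numbers of such bundles applies to $E=TM$ with $k=n$. When $M$ is K\"{a}hler this is the $k=n$ case of Corollary \ref{corollary}, namely $0\le\int_Mc_{\lambda}(TM)\le\int_Mc_1(M)^n$ for all $\lambda\in\text{Par}(n)$; in general it is the Bott-Chern analogue \cite[Thm 3.2]{LiMA}, valid on arbitrary compact complex manifolds. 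Granting this chain, the hypothesis that some $\int_Mc_{\lambda}(TM)\neq0$ together with nonnegativity gives $\int_Mc_{\lambda}(TM)>0$, whence $\int_Mc_1(M)^n\ge\int_Mc_{\lambda}(TM)>0$, and the lemma follows.

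The hard part is the non-K\"{a}hler case: Corollary \ref{corollary} is stated for a K\"{a}hler base, and a compact homogeneous complex manifold need not be K\"{a}hler (for instance the Calabi-Eckmann manifolds). The device that removes this obstruction is that global generation of $TM$ is strictly stronger than nefness---it yields Bott-Chern semipositivity---so the required inequality $\int_Mc_{\lambda}(TM)\le\int_Mc_1(M)^n$ persists without any K\"{a}hler assumption by \cite[Thm 3.2]{LiMA}. This is exactly the place where the extra strength coming from homogeneity, rather than mere nefness, is essential.
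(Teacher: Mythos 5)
Your proposal is correct and follows essentially the same route as the paper: both arguments hinge on the observation that global generation of $TM$ (from homogeneity) gives Bott--Chern semipositivity, so the inequality $0\le\int_Mc_{\lambda}(TM)\le\int_Mc_1(M)^n$ of \cite[Thm 3.2]{LiMA} applies even on a non-K\"ahler base, turning the hypothesis that one Chern number is nonzero into $c_1^n[M]>0$, after which a semipositive representative of $c_1(M)$ with positive top self-intersection must be quasi-positive. The only (harmless) difference is cosmetic: you build the semipositive representative by pulling back the Fubini--Study form under the base-point-free anticanonical map, whereas the paper takes the first Chern form of a Griffiths semipositive metric on $TM$.
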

\begin{proof}
The inequality (\ref{consequence1.1}) also holds for $TM$ when $M$ is a homogeneous complex manifold, as mentioned in Remark \ref{remark} (see \cite[Thm Cor.5.2]{LiMA}). Hence some Chern number $c_{\lambda}\neq0$ is equivalent to the Chern number $c_1^n>0$. Since $M$ is homogeneous, it can be endowed with a Hermitian metric $h$ which is Griffiths semipositive \big(see (\ref{four different semipositivity})\big), and hence the first Chern \emph{form} $c_1(h)$ with respect to $h$ is semipositive as a $(1,1)$-form. This, together with the Chern number $c_1^n>0$, implies that $c_1(h)$ is a quasi-positive closed $(1,1)$-form representing $c_1(M)$.
\end{proof}

\begin{lemma}
$M$ is projective.
\end{lemma}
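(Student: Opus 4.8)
The plan is to bypass the Tits/normalizer-fibration structure theory for compact homogeneous manifolds altogether, and instead to manufacture directly an \emph{invariant} K\"{a}hler form representing $c_1(M)$, after which projectivity is immediate from Kodaira. By Lemma \ref{lemma1} and its proof, the Griffiths semipositive metric $h$ coming from the global generation of $TM$ \big(recall (\ref{four different semipositivity})\big) yields a closed semipositive $(1,1)$-form $c_1(h)\geq0$ representing $c_1(M)$, and moreover we already know $c_1^n[M]>0$. The difficulty is that $c_1(h)$ need not be invariant under a transitive group, and the full automorphism group is typically non-compact, so it cannot be averaged over directly.

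To remedy this I would first pass to a compact transitive group. Let $G:=\text{Aut}^0(M)$, the identity component of the biholomorphism group of the compact connected manifold $M$; it is a Lie group acting transitively since $M$ is homogeneous. Let $K$ be a maximal compact subgroup of $G$. By Montgomery's theorem, a maximal compact subgroup of a connected Lie group acting transitively on a compact manifold is again transitive, so $K$ acts transitively on $M$. I then average the semipositive representative over $K$: set $\bar{\alpha}:=\int_K g^{\ast}c_1(h)\,dg$ against normalized Haar measure. Since $K$ acts by biholomorphisms, each $g^{\ast}$ preserves the $(1,1)$-type, closedness, and semipositivity, so $\bar{\alpha}$ is a smooth closed semipositive $K$-invariant $(1,1)$-form. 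As $K$ is connected, each $g^{\ast}c_1(h)$ is cohomologous to $c_1(h)$ (Cartan's homotopy formula), whence $[\bar{\alpha}]=c_1(M)$ in $H^2(M;\mathbb{R})$ and in particular $\int_M\bar{\alpha}^n=c_1^n[M]>0$.

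The final step is a constant-rank argument. Because $\bar{\alpha}$ is $K$-invariant and $K$ is transitive, the rank of $\bar{\alpha}$ is constant on $M$. Were this rank strictly less than $n$ at one (hence every) point, we would have $\bar{\alpha}^n\equiv0$ pointwise and thus $\int_M\bar{\alpha}^n=0$, contradicting $c_1^n[M]>0$. Hence $\bar{\alpha}$ has full rank everywhere, and being semipositive it is positive definite at every point, i.e. $\bar{\alpha}$ is a K\"{a}hler form. Therefore $M$ is K\"{a}hler with K\"{a}hler class $[\bar{\alpha}]=c_1(M)\in H^2(M;\mathbb{Z})$, an integral class, and the Kodaira embedding theorem gives that $M$ is projective.

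The step I expect to be the genuine obstacle is securing the invariant semipositive representative while retaining transitivity: the Griffiths metric from global generation is not a priori invariant, and $G$ is in general non-compact so averaging over it is unavailable. Montgomery's transitivity theorem is precisely what unlocks the argument, allowing me to average over the compact group $K$ without losing transitivity, and it is this transitivity that makes the constant-rank conclusion---and hence the upgrade from ``semipositive'' to ``strictly positive''---work.
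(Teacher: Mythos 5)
Your argument breaks precisely at the step you yourself single out as the one that ``unlocks'' everything: the appeal to Montgomery's theorem. Montgomery's theorem does \emph{not} say that a maximal compact subgroup of a connected Lie group acting transitively (even effectively) on a compact manifold is again transitive; it requires the manifold to be simply connected, or at least to have finite fundamental group. Without that hypothesis the statement is false. A real counterexample: the Euclidean group $E(2)$ of the plane acts transitively and effectively on the compact $3$-manifold $E(2)/\mathbb{Z}^2$, yet its maximal compact subgroups are circles, which cannot be transitive. Worse, the failure occurs exactly in the class of manifolds at hand: if $\Gamma$ is a cocompact lattice in $SL(2,\mathbb{C})$, then $M=SL(2,\mathbb{C})/\Gamma$ is a compact homogeneous complex manifold (complex parallelizable, so all its Chern numbers vanish; it sits in the second case of Theorem \ref{rationalhomogeneous}), and the identity component of its biholomorphism group is $SL(2,\mathbb{C})$ modulo a central subgroup, acting by left translations. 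Its maximal compact subgroups have real dimension $3$, while $\dim_{\mathbb{R}}M=6$; since any compact connected group of biholomorphisms lies in a maximal compact subgroup of $\mathrm{Aut}^0(M)$, \emph{no} compact group acts transitively on this $M$ at all. So ``passing to a compact transitive group'' is simply impossible for a general compact homogeneous complex manifold.

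This gap is not a removable technicality, because of a circularity: the true version of Montgomery's theorem needs $\pi_1(M)$ finite, and at this stage of the proof you have no control whatsoever on $\pi_1(M)$ --- finiteness of the fundamental group is essentially part of what the theorem is driving at (rational homogeneous manifolds are simply connected). Note also that in your scheme the hypothesis $c_1^n[M]>0$ is only used \emph{after} the compact group has been produced, so it cannot come to the rescue of that step. The remainder of your argument is fine: \emph{if} a compact transitive group of biholomorphisms existed, then averaging $c_1(h)$, the constant-rank argument, and Kodaira embedding would indeed make $M$ projective (in fact Fano), a stronger conclusion than the lemma asks for. The paper avoids the issue entirely and never needs an invariant metric or a compact transitive group: quasi-positivity of $c_1(M)$ from Lemma \ref{lemma1} plus the Siu--Demailly solution of the Grauert--Riemenschneider conjecture (cf. \cite[p.96]{MM}) shows that $M$ is Moishezon, and then Demailly's theorem (\cite[Cor.9.6]{De92} or \cite[Thm 4.1]{DPS}) that a Moishezon manifold with nef tangent bundle is projective finishes the proof.
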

\begin{proof}
Lemma \ref{lemma1} means that the anti-canonical line bundle of $M$ is quasi-positive. Then Siu and Demailly's solution to the Grauert-Riemenschneider conjecture (cf. \cite[p.96]{MM}) tells us that $M$ is Moishezon. Another result of Demailly (\cite[Cor.9.6]{De92} or \cite[Thm 4.1]{DPS}) says that a Moishezon manifold with nef tangent bundle is projective, which completes the proof of projectivity.
\end{proof}

\begin{lemma}
$M$ is a rational homogeneous manifold.
\end{lemma}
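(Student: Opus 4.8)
The plan is to combine the projectivity just established with the classical structure theorem for compact homogeneous K\"{a}hler manifolds. Since $M$ is projective it is in particular K\"{a}hler, and it is homogeneous by hypothesis, so the Borel--Remmert structure theorem applies: a connected compact homogeneous K\"{a}hler manifold is biholomorphic to a product $T\times Q$, where $T$ is a complex torus and $Q=G/P$ is a rational homogeneous manifold with $G$ semi-simple and $P$ parabolic. Writing $s:=\dim_{\mathbb{C}}T$ and hence $\dim_{\mathbb{C}}Q=n-s$, the whole task reduces to ruling out a positive-dimensional torus factor, i.e. to showing $s=0$, which forces $M\cong Q$ and finishes the proof.

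To carry this out I would feed back the positivity $c_1^n>0$ obtained in (the proof of) Lemma \ref{lemma1}. Because the tangent bundle of the torus $T$ is trivial, the splitting $TM=\mathrm{pr}_T^{\ast}TT\oplus\mathrm{pr}_Q^{\ast}TQ$ gives $c_1(M)=\mathrm{pr}_Q^{\ast}c_1(Q)$, whence
\be
c_1(M)^n=\mathrm{pr}_Q^{\ast}\big(c_1(Q)^n\big).
\ee
If $s\geq1$ then $n>\dim_{\mathbb{C}}Q$, so $c_1(Q)^n$ lies in $H^{2n}(Q;\mathbb{R})=0$ and therefore $c_1(M)^n=0$, contradicting $c_1^n[M]>0$. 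Hence $s=0$, the torus factor collapses to a point, and $M\cong Q=G/P$ is rational homogeneous, as required. (That such an $M$ is moreover Fano, as asserted in Theorem \ref{rationalhomogeneous}, is then immediate from $c_1^n>0$ together with nefness of $TM$, via \cite[Prop.3.10]{DPS}.)

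The genuinely substantive step is the invocation of the Borel--Remmert theorem, which is where all the Lie-theoretic content is concentrated; everything afterwards is the short Chern-class computation above. The one point worth checking carefully is the standard fact that homogeneity of $M$ together with projectivity really does place us in the \emph{compact homogeneous K\"{a}hler} category to which Borel--Remmert applies. A more self-contained but longer alternative would be to run the Tits fibration of a compact complex homogeneous manifold directly and argue that the condition $c_1^n>0$ forces the fibre (a parallelizable factor with vanishing higher Chern data) to be trivial, leaving a rational homogeneous base; the product description above is preferable, however, since it interfaces immediately with the $c_1^n>0$ input already in hand.
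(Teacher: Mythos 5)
Your proposal is correct and follows essentially the same route as the paper: invoke the Borel--Remmert structure theorem for the compact homogeneous projective (equivalently, K\"{a}hler) manifold $M$, then use $c_1^n[M]>0$ to rule out the torus/abelian-variety factor. The only difference is that you spell out the Chern-class computation showing a positive-dimensional torus factor would force $c_1(M)^n=0$, a step the paper leaves implicit.
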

\begin{proof}
A classical result of Borel and Remmert (\cite{BR}) says that a compact (connected) homogeneous projective manifold is the product of an abelian variety and a rational homogenous manifold. Since $c_1^n>0$, there is no such factor of abelian variety in this decomposition of $M$. This completes the proof.
\end{proof}
%\section{Appendix: Proof of Proposition \ref{cover lemma}}\label{section-appendix}
%We have seen from aforementioned proofs that Proposition \ref{cover lemma} has played key roles at places. For the reader's convenience as well as completeness, we provide in this Appendix a proof for it.
%\begin{proof}
%The ``if" part in Proposition \ref{cover lemma} is easy to check and so we focus on the ``only if" part.

%For (\ref{cover2}), it suffices to show that if $\lambda$ is of the form (\ref{4}), $j\geq i+2$ and $\mu_i>\mu_j$, then $\lambda$ cannot cover $\mu$.

%\emph{Case $1$:} $\mu_i>\mu_{i+1}$.

%In this case take
%$$\nu^{(1)}:=(\mu_1,\ldots,\mu_{i},\mu_{i+1}+1,\mu_{i+2},\ldots,\mu_{j-1},
%\mu_{j}-1,\mu_{j+1},\ldots),$$
%and we have $\lambda>\nu^{(1)}>\mu$.

%\emph{Case $2$:} $\mu_i=\mu_{i+1}$.

%In this case we may assume
%$$\mu_i=\mu_{i+1}=\cdots=\mu_{i+t}>\mu_{i+t+1}\geq\cdots\geq\mu_j$$
%and set
%$$\nu^{(2)}:=(\mu_1,\ldots,\mu_{i-1},\mu_{i}+1,
%\mu_{i+1},\ldots,\mu_{i+t-1},
%\mu_{i+t}-1,\mu_{i+t+1},\ldots,\mu_j,\ldots).$$
%Then we have $\lambda>\nu^{(2)}>\mu$.
%\end{proof}

%\section*{Acknowledgements}
%The author thanks Joshua P. Swanson for pointing out the reference \cite{GK} and some related comments.

\end{document}